\documentclass[a4paper,11pt,reqno]{amsart}
\usepackage{amsfonts}
\usepackage{amssymb}
\usepackage{amscd}
\usepackage{palatino}
\usepackage{amsthm}
\usepackage{a4wide}
\usepackage{mathrsfs}
\usepackage{tikz}
\usepackage{amsmath}
\usepackage{enumitem}

\usetikzlibrary{matrix,arrows,calc}

\usepackage{color}
\definecolor{citegreen}{rgb}{0,0.6,0}
\definecolor{refred}{rgb}{0.8,0,0}
\usepackage[colorlinks, urlcolor=blue, citecolor=citegreen,linkcolor=refred]{hyperref}

\usepackage[fixamsbugs]{mathtools}
\mathtoolsset{showonlyrefs,showmanualtags}

\theoremstyle{plain}

\newtheorem{teorema}{Theorem}[section]
\newtheorem{lemma}[teorema]{Lemma}
\newtheorem{proposizione}[teorema]{Proposition}
\newtheorem{corollario}[teorema]{Corollary}
\newtheorem{ackn}{Acknowledgments\!}

\theoremstyle{definition}
\newtheorem{definizione}[teorema]{Definition}

\theoremstyle{remark}

\newtheorem{osservazione}[teorema]{Remark}

\numberwithin{equation}{section}

\def\SSSS{{{\mathbb S}}}

\def\R{{\RR}}
\def\RR{{\mathbb R}}

\def\HH{{{\mathbb H}}}

\def\HHH{{\mathrm H}}

\def\Ric{{\mathrm {Ric}}}
\def\Riem{{\mathrm {Riem}}}

\def\Rm{{\mathrm {Rm}}}

\def\tr{\operatornamewithlimits{tr}\nolimits}

\newcommand{\Sec}{\mathrm{Sec}}

\newcommand{\id}{\mathrm{Id}}

\DeclareMathOperator{\vol}{Vol}

\DeclareRobustCommand{\rchi}{{\mathpalette\irchi\relax}}
\newcommand{\irchi}[2]{\raisebox{+1.5pt}{$#1\chi$}} 

\makeatletter
\patchcmd{\@setaddresses}{\indent}{\noindent}{}{}
\patchcmd{\@setaddresses}{\indent}{\noindent}{}{}
\patchcmd{\@setaddresses}{\indent}{\noindent}{}{}
\patchcmd{\@setaddresses}{\indent}{\noindent}{}{}
\makeatother

\title[Distance Functions, Curvature and Topology]{Distance Functions, Curvature and Topology}

\author[Carlo Mantegazza]{Carlo Mantegazza}
\address[Carlo Mantegazza]{Dipartimento di Matematica e Applicazioni ``Renato Caccioppoli'', Universit\`a di Napoli Federico II, Italy}
\email[C. Mantegazza]{carlo.mantegazza@unina.it}

\author[Francesca Oronzio]{Francesca Oronzio}
\address[Francesca Oronzio]{Scuola Superiore Meridionale, Napoli, Italy.}
\email[F. Oronzio]{f.oronzio@ssmeridionale.it}

\date{\today}

\subjclass[2020]{53C22, 53C21, 53C24}

\keywords{Distance function, curvature}

\usepackage{palatino}
\usepackage{a4wide}
\usepackage{anyfontsize}

\begin{document}

\hbadness=50000
\vbadness=50000

\begin{abstract} 
We discuss some properties of the distance functions on Riemannian manifolds and we relate their behavior to the geometry of the manifolds. This leads to alternative proofs of some ``classical'' theorems connecting curvature and topology.
\end{abstract}

\maketitle

\setcounter{tocdepth}{1}
\tableofcontents

\section{Introduction}

In this note we collect and organize in detail, discuss and revisit some material about the properties of the distance functions on Riemannian manifolds, scattered throughout the three editions of the book of Peter Petersen~\cite{petersen1,petersen2,petersen3}, in order to connect their behavior with the geometry of the manifolds and possibly have a point of view, different from the ``standard'' one based on the study of the geodesics, leading to alternative proofs of some ``classical'' theorems connecting curvature and topology. Heuristically (having in mind the context of the {\em calculus of variations}), focusing on the geodesic flow is considering the ``Lagrangian'' point of view in studying the length (or energy) functional, since they are its critical points (solutions of its Euler equations, which are a system of ODEs), while considering the distance functions between points or subsets of the manifold  is taking the ``Eulerian'' point of view, as the distance functions are the ``value functions'' associated to the length functional (using a terminology from {\em optimal control theory}, see~\cite{barcap} for instance). The straightforward connection between the two approaches is clearly given by the fact that the distance functions are realized by the length of the minimal geodesics and conversely, the geodesics are locally the integral curves of the gradients of the distance functions.

This connection also offers the possibility of using more ``analytic'' tools, related to the general study of the distance functions, seeing them simply as solutions of the {\em Hamilton--Jacobi} {\em (eikonal) equation} $\Vert \nabla u\Vert=1$ (see~\cite{mant4}). Following Petersen, we describe this approach in general in the next section, then we specialize it to the particular distance functions from a fixed point (in Section~\ref{The distance function from a point}), in order to connect the curvature and the topology/geometry of the manifold. The formulas we get, give us a set of information that can substitute what can be inferred about the behavior of geodesics, in the ``classical'' approach, using the second variation of the length functional and the theory of Jacobi fields.

As examples of application, we will show in Section~\ref{curvtop} how these informations can be used to obtain alternative proofs (possibly easier or more intuitive than the ``standard'' ones) of the existence of local isometries between Riemannian manifolds of equal constant sectional curvature, of the Bonnet--Myers and Cartan--Hadamard theorems, of Synge's lemma and in estimating the volume growth of geodesic balls. 

Finally, it is also worth underlining that the approach to these results, which belong to the field called {\em comparison geometry}, is to describe locally the manifolds in polar coordinates and to ``compare'' the induced metric and the curvature of the geodesic spheres (not geometrically, but analitically, by studying the ODEs satisfied by their components along geodesics) with the analogous ones of appropriate space forms.

\medskip

{\em In the whole paper, we will consider only complete $n$--dimensional Riemannian manifolds with $n\geqslant 2$ and $\nabla$ will be the Levi--Civita connection. Moreover, we will always use the Einstein convention of summing over repeated indices.}

\begin{ackn}
All authors are members of INdAM--GNAMPA. The first author is partially supported by the PRIN Project 2022E9CF89 ``GEPSO -- Geometric Evolution Problems and Shape Optimization''.
\end{ackn}

\section{Distance functions}\label{eriemsec}

References for the material of this section are~\cite[Sections~4--5, Chapter~2]{petersen2} and~\cite[Section~3.2]{petersen3} (see also~\cite[Sections~2.3, 2.4]{petersen1} and~\cite{mant4}).

\begin{definizione}
Let $(M,g)$ be a Riemannian manifold and let $r:U\rightarrow \R $ be a smooth function, where $U$ is an open set of $M$. We say that $r$ is a {\em distance function} if it satisfies the {\em Hamilton--Jacobi equation} $\Vert \nabla r\Vert=1$ ({\em eikonal equation}) on $U$.
\end{definizione}

A classic example is the distance function $d_p:M\to\R$ from a point $p\in M$, which is a distance function outside of $\{p\}$ and the cut~locus of $p$. The same holds also if $K$ is a smooth submanifold of $M$, setting $r=d_K$ (see~\cite{mant4} for the notion of cut~locus of a submanifold and the analysis of $d_K$ when $K$ is only of class $C^k$ or merely a closed set). Moreover, if $K$ is a hypersurface, in a small ball $B$ of $M$ centered at a point $p\in K$, we can consider the {\em signed distance function} $\widetilde{d}_K$ from $K$, which coincides with $d_K$ in one of the two connected components of $B\setminus K$ and with $-d_K$ in the other. It is then easy to see that this function is smooth in the whole ball $B$, also at the points of $K$. We remark that restricting ourselves to considering only these functions would not be particularly limiting, since at least locally every distance function $r:U\to\R$ coincides with the function $\widetilde{d}_{\Sigma_\rho}+\rho$, where $\Sigma_\rho=\{r=\rho\}$, that is, $r$ is locally equal to a signed distance function, up to a constant.

\smallskip

Given a distance function $r:U\rightarrow \R $ in $(M,g)$, we will denote by $\partial_r$ the gradient vector field $\nabla r$, by $\Sigma_\rho=\left\{\,p\in U:r(p)=\rho\,\right\}$ the level sets of $r$, which (if not empty) are smooth (embedded) hypersurfaces of $(M,g)$, by the implicit function theorem, since $\Vert \nabla r\Vert =1$ on $U$. Then, for every $p\in \Sigma_\rho$, the tangent space $T_{p}\Sigma_\rho$ is given by the kernel of $dr_p:T_pM\to\R$, which coincides with the space of vectors $v\in T_{p}M$ orthogonal to $\partial_r|_p$. We will denote by $g^{\rho}$, $\nabla^{\rho}$, $\Rm^{\rho}$ and $\Riem^\rho$, the induced metric on $\Sigma_\rho$, the Levi--Civita connection, the Riemann  operator and tensor of $(\Sigma_\rho,g^{\rho})$, respectively and if there is no risk of ambiguity, we will drop the superscript $\rho$.

\medskip

{\em We adopt the convention that 
$$
\Rm(X,Y)Z= \nabla_{X}\nabla_{Y}Z-\nabla_{Y}\nabla_{X}Z-\nabla_{\left[X,Y\right]}Z
$$
and
$$
\Riem(X,Y,Z,W)=g(\Rm(X,Y)Z,W),
$$
while the sectional curvature of a plane generated by two linearly independent vectors $v,w\in T_pM$ is given by
$$
\Sec(v,w)=\frac{\Riem(v,w,w,v)}{\Vert v\Vert_{p}^{2}\Vert w\Vert_{p}^{2}-g_{p}^{2}( v,w)}.
$$
}

\medskip

We define $\mathrm{S}:\Gamma(TU)\to\Gamma(TU)$ as
$$
\mathrm{S}(X)=\nabla_{X} \partial_r,
$$
for every vector field $X$ on $U$. It is the $(1,1)$--version of the Hessian of $r$, indeed,
$$
g(\mathrm{S}(X),Y)= g(\nabla_{X} \partial_r,Y)=\mathrm{Hess}\,r(X,Y)=\mathrm{Hess}\,r(Y,X)=g(X,\nabla_{Y} \partial_r)=g(X,\mathrm{S}(Y)).
$$
By means of tensor $\mathrm{S}$, we can define the $(1,1)$--tensor $\mathrm{S}^2$ as $\mathrm{S}\circ \mathrm{S}$ and the $(0,2)$--tensor $\mathrm{Hess}^2r$ by
$$
\mathrm{Hess}^2r(X,Y)=g(\mathrm{S}^2(X),Y)=g(\mathrm{S}(X),\mathrm{S}(Y))=\mathrm{Hess}\,r(\mathrm{S}(X),Y).
$$
The latter is clearly semidefinite positive. We also notice that in a local chart, there hold
$$
\tr(\mathrm{S})=g(\mathrm{S}(\partial_i),\partial_j)g^{ij}=g^{ij}\mathrm{Hess}_{ij}r=g^{ij}\nabla^2_{ij}r=g^{ij}\nabla_i\nabla_jr=\Delta r
$$
and
$$
\tr(\mathrm{S}^2)=g(\mathrm{S}^2(\partial_i),\partial_j)g^{ij}=g(\mathrm{S}(\partial_i),\mathrm{S}(\partial_j))g^{ij}=\mathrm{S}_i^k\mathrm{S}_j^\ell g^{ij}g_{k\ell}=\Vert \mathrm{S}\Vert^2=\Vert\mathrm{Hess}\,r\Vert^2,
$$
where $\partial_i$ are the coordinate vector fields.

The {\em Lie derivative} of a tensor will be denoted by $L$, in particular, if $X,Y$ are two vector fields, we have $L_XY=[X,Y]$, in local coordinates,
$$
L_XY=\Bigl(X^i\frac{\partial Y^j}{\partial x^i}-Y^i\frac{\partial X^j}{\partial x^i}\Bigr)\partial_j,
$$
obviously $L_XY=-L_YX$ and finally $L_XX=0$.\\
We then underline the relation
$$
\nabla_{\partial_r}X=\nabla_X\partial_r+[\partial_r,X]=\mathrm{S}(X)+L_{\partial_r}X,
$$
that is,
\begin{equation}\label{eq800}
\nabla_{\partial_r}X-L_{\partial_r}X=\mathrm{S}(X).
\end{equation}
It says that the difference between the covariant and the Lie derivative with respect to $\partial_r$ of a vector field $X$ on $U$, is given by the tensor $\mathrm{S}$ applied to $X$.

\begin{lemma}\label{partialrautovettorediS}
Let $r:U\rightarrow \R $ be a distance function. Then we have
$$ 
\nabla_{\partial_{r}} \partial_{r}=0.
$$
Hence, $\mathrm{S}(\partial_r)=0$, that is, $\partial_r$ is a null eigenvector of $\mathrm{S}$ and $\mathrm{Hess}\,r(\partial_r,\cdot)=0$.
\end{lemma}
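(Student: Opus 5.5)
The plan is to differentiate the eikonal equation $\Vert\nabla r\Vert^2=g(\partial_r,\partial_r)=1$ along an arbitrary vector field and then use the symmetry of the Hessian (equivalently, of $\mathrm{S}$) established just above.

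First, fix any vector field $X$ on $U$. Since $g(\partial_r,\partial_r)\equiv 1$ on $U$, its derivative in the direction $X$ vanishes. The Levi--Civita connection is metric, so
$$
0=X\bigl(g(\partial_r,\partial_r)\bigr)=2\,g(\nabla_X\partial_r,\partial_r)=2\,g(\mathrm{S}(X),\partial_r).
$$
Next, I use the self-adjointness of $\mathrm{S}$, that is, $g(\mathrm{S}(X),Y)=g(X,\mathrm{S}(Y))$, which follows from the symmetry of $\mathrm{Hess}\,r$. With $Y=\partial_r$ it gives
$$
g(X,\mathrm{S}(\partial_r))=g(\mathrm{S}(X),\partial_r)=0 \qquad\text{for every } X.
$$
By nondegeneracy of $g$ this forces $\mathrm{S}(\partial_r)=0$. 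By the definition of $\mathrm{S}$, this is exactly $\nabla_{\partial_r}\partial_r=0$.

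The remaining assertions are immediate reformulations. $\partial_r$ is a nonzero vector (it has norm one) lying in the kernel of $\mathrm{S}$, so it is a null eigenvector. Moreover, $\mathrm{Hess}\,r(\partial_r,Y)=g(\mathrm{S}(\partial_r),Y)=0$ for all $Y$.

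There is no real obstacle here. The only point requiring care is the use of the symmetry of the Hessian: this is what converts the ``orthogonality'' information $g(\nabla_X\partial_r,\partial_r)=0$ into the statement $\nabla_{\partial_r}\partial_r=0$, which is the geodesic equation for the integral curves of $\partial_r$. That symmetry relies on the Levi--Civita connection being torsion-free, which is assumed throughout the paper.
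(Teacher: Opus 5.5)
Your proposal is correct and is essentially the paper's own proof. Both use metric compatibility to get $X\,g(\partial_r,\partial_r)=2\,g(\nabla_X\partial_r,\partial_r)=0$, then the symmetry of $\mathrm{Hess}\,r$ (self-adjointness of $\mathrm{S}$) to conclude $g(\nabla_{\partial_r}\partial_r,X)=0$ for every $X$.
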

\begin{proof}
For every vector field $X$ on $U$, we have
$$
g(\nabla_{\partial_{r}} \partial_{r},X)=\mathrm{Hess}\,r(\partial_{r},X)=\mathrm{Hess}\,r(X,\partial_{r})=g(\nabla_{X}\partial_{r},\partial_{r})=Xg(\partial_{r},\partial_{r})/{2}=0,
$$
where we used the symmetry of the Hessian and $g(\partial_{r},\partial_r)=1$.
\end{proof}

A direct consequence of Lemma~\ref{partialrautovettorediS} is that any integral curve for $\partial_r$ is a geodesic of $(M,g)$, parametrized in arclength.

\begin{osservazione} Obviously, by definition, we also have $L_{\partial_{r}} \partial_{r}=0$.
\end{osservazione}

The field $\partial_r$ is the outward pointing unit normal vector field along any nonempty $\Sigma_\rho$ and, for every $X\in \Gamma (T\Sigma_\rho)$, the vector field $\mathrm{S}(X)$ along $\Sigma_\rho$ belongs precisely to $\Gamma(T\Sigma_\rho)$. Then, the {\em shape operator} of $\Sigma_\rho$ coincides with the map that associates $\mathrm{S}(X)\in\Gamma (T\Sigma_\rho)$ to each $X\in\Gamma (T\Sigma_\rho)$, therefore it will be still denoted by $\mathrm{S}$.
The associated symmetric $(0,2)$--tensor on the hypersurface $\Sigma_\rho$, defined as/satisfying
$$
\mathrm{h}(X,Y)=-g(\nabla_XY,\partial_r)=g(\mathrm{S}(X),Y)=\mathrm{Hess}\,r(X,Y)
$$
for every pair of tangent vector fields $X,Y$ to $\Sigma_\rho$, is called {\em second fundamental form} $\mathrm{h}$. Its trace or the trace of the shape operator $\mathrm{S}$ with respect to $g^{\rho}$ is the {\em mean curvature} $\HHH$ of $\Sigma_\rho$. Notice that $\HHH=\tr \mathrm{S}$ and $\Vert\mathrm{h}\Vert^2=\tr \mathrm{S}^2$, by Lemma~\ref{partialrautovettorediS}.

\begin{proposizione}[Curvature equations]\label{equazionidellacurvatura}
The following equalities hold~\cite[Theorems~2-4, Section~4.2, Chapter~2]{petersen2},
\begin{enumerate}
\item $\nabla_{\partial_{r}}\mathrm{S}+\mathrm{S}^{2}=\Rm(\partial_{r},\cdot)\partial_{r}$
\item $\Riem(X,Y,Z,W)=\Riem^{\rho}(X,Y,Z,W)+{\mathrm{h}}(X,Z){\mathrm{h}}(Y,W)-\mathrm{h}(Y,Z){\mathrm{h}}(X,W)$
\item $\Riem(X,Y,Z,\partial_{r})=\nabla_{Y}{\mathrm{h}}(X,Z)-\nabla_{X}{\mathrm{h}}(Y,Z)$
\end{enumerate}
for any $X,Y,Z,W$ tangent vector fields to $\Sigma_\rho$.
\end{proposizione}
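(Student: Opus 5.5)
The plan is to prove all three identities by direct computation from the definition of $\mathrm{S}$ and of $\Rm$. Throughout, Lemma~\ref{partialrautovettorediS} ($\nabla_{\partial_r}\partial_r=0$) and relation~\eqref{eq800} are the key tools.

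\emph{(1) The radial (Riccati) equation.} For an arbitrary vector field $X$ on $U$, the Leibniz rule for the $(1,1)$--tensor $\mathrm{S}$ gives
$$
(\nabla_{\partial_r}\mathrm{S})(X)=\nabla_{\partial_r}(\mathrm{S}(X))-\mathrm{S}(\nabla_{\partial_r}X)=\nabla_{\partial_r}\nabla_X\partial_r-\nabla_{\nabla_{\partial_r}X}\partial_r .
$$
The curvature convention and $\nabla_X\nabla_{\partial_r}\partial_r=0$ (Lemma~\ref{partialrautovettorediS}) give
$$
\Rm(\partial_r,X)\partial_r=\nabla_{\partial_r}\nabla_X\partial_r-\nabla_{[\partial_r,X]}\partial_r .
$$
Subtracting, the difference of the two expressions is $\nabla_{[\partial_r,X]}\partial_r-\nabla_{\nabla_{\partial_r}X}\partial_r=-\nabla_{\mathrm{S}(X)}\partial_r=-\mathrm{S}^2(X)$, by~\eqref{eq800}. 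This yields $\nabla_{\partial_r}\mathrm{S}+\mathrm{S}^2=\Rm(\cdot,\cdot)$ applied as $X\mapsto\Rm(\partial_r,X)\partial_r$, which is the statement. The only point needing care is the sign convention, and I expect it to be consistent with the given definition of $\Rm$.

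\emph{(2) Gauss equation.} For $X,Y,Z$ tangent to $\Sigma_\rho$, decompose $\nabla_XZ=\nabla^\rho_XZ-\mathrm{h}(X,Z)\partial_r$, using $\mathrm{h}(X,Z)=-g(\nabla_XZ,\partial_r)$. Then compute $\nabla_X\nabla_YZ$ by differentiating this decomposition once more:
$$
\nabla_X\nabla_YZ=\nabla_X\nabla^\rho_YZ-X(\mathrm{h}(Y,Z))\,\partial_r-\mathrm{h}(Y,Z)\,\mathrm{S}(X).
$$
Pair this with $W$ tangent. The $\partial_r$ term drops, and $g(\nabla_X\nabla^\rho_YZ,W)=g(\nabla^\rho_X\nabla^\rho_YZ,W)$. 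Antisymmetrize in $X,Y$ and note that $[X,Y]$ is tangent, so $g(\nabla_{[X,Y]}Z,W)=g(\nabla^\rho_{[X,Y]}Z,W)$. This gives
$$
\Riem(X,Y,Z,W)=\Riem^\rho(X,Y,Z,W)-\mathrm{h}(Y,Z)\mathrm{h}(X,W)+\mathrm{h}(X,Z)\mathrm{h}(Y,W),
$$
as claimed.

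\emph{(3) Codazzi equation.} Using the same expansion, pair with $\partial_r$ instead:
$$
g(\nabla_X\nabla_YZ,\partial_r)=-\mathrm{h}(X,\nabla^\rho_YZ)-X(\mathrm{h}(Y,Z)),
$$
since $g(\mathrm{S}(X),\partial_r)=0$. Antisymmetrize in $X,Y$ and subtract $g(\nabla_{[X,Y]}Z,\partial_r)=-\mathrm{h}([X,Y],Z)$. Then rewrite $[X,Y]=\nabla^\rho_XY-\nabla^\rho_YX$ and regroup the terms into
$$
(\nabla_Y\mathrm{h})(X,Z)-(\nabla_X\mathrm{h})(Y,Z)
$$
with $\nabla=\nabla^\rho$ on $\Sigma_\rho$. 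The main (mild) obstacle is purely bookkeeping: verifying that the terms combine into covariant derivatives of $\mathrm{h}$ with the correct overall sign. I would check this at a point using normal coordinates on $\Sigma_\rho$ with $\nabla^\rho_{\partial_i}\partial_j=0$, which kills all lower-order terms.
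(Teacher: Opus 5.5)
Your computations are correct, and the signs agree with the paper's conventions for $\Rm$, $\Riem$ and $\mathrm{h}$. In particular, the Codazzi terms do regroup into $(\nabla^\rho_Y\mathrm{h})(X,Z)-(\nabla^\rho_X\mathrm{h})(Y,Z)$ once you substitute $[X,Y]=\nabla^\rho_XY-\nabla^\rho_YX$, so no coordinate check is needed.

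The paper gives no proof of this proposition and simply cites Petersen. There the three identities are derived by essentially the same direct computation, using $\mathrm{S}(X)=\nabla_X\partial_r$, $\nabla_{\partial_r}\partial_r=0$ and the splitting $\nabla_XZ=\nabla^\rho_XZ-\mathrm{h}(X,Z)\partial_r$.
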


The first identity is called {\em radial curvature equation} (see also~\cite[Theorem~2.4 -- Equation~2.5]{ambman1} and~\cite[Theorem~3.2]{ambson} where the ambient is the Euclidean space and the codimension is arbitrary), while the second and the third one are the classical {\em Gauss} and {\em Codazzi--Mainardi equations}.

\begin{proposizione}[Equations of Riemannian geometry]\label{efgr} 
The following equalities hold~\cite[Proposition~7, Section~5.1, Chapter~2]{petersen2},
\begin{enumerate}
\item $\nabla_{\partial_{r}}g=0$
\item $L_{\partial_{r}}g=2 \mathrm{Hess}\,r$
\item $L_{\partial_{r}}\mathrm{S}=\nabla_{\partial_{r}}\mathrm{S}=-\mathrm{S}^{2}+\Rm(\partial_{r},\cdot)\partial_{r}$
\item $\nabla_{\partial_{r}} \mathrm{Hess}\,r(X,Y) + \mathrm{Hess}^{2}r(X,Y)=-\Riem(X,\partial_{r},\partial_{r},Y)$
\item $L_{\partial_{r}} \mathrm{Hess}\,r(X,Y) - \mathrm{Hess}^{2}r(X,Y)=-\Riem(X,\partial_{r},\partial_{r},Y)$
\end{enumerate}
for any $X,Y$ vector fields on $U$.
\end{proposizione}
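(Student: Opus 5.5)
Each identity will be checked directly from the definitions, using the Levi--Civita properties, Lemma~\ref{partialrautovettorediS}, relation~\eqref{eq800} and the radial curvature equation of Proposition~\ref{equazionidellacurvatura}. I take the five items in order.

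\emph{Items (1) and (2).} Item~(1) is just metric compatibility of $\nabla$, so it holds for any direction, in particular $\partial_r$. For item~(2) I use the standard formula $(L_Xg)(Y,Z)=g(\nabla_YX,Z)+g(Y,\nabla_ZX)$, which holds because $\nabla$ is torsion free and metric. With $X=\partial_r$ this gives
$$
(L_{\partial_r}g)(Y,Z)=g(\mathrm{S}(Y),Z)+g(Y,\mathrm{S}(Z))=2\,\mathrm{Hess}\,r(Y,Z),
$$
by the self-adjointness of $\mathrm{S}$.

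\emph{Item (3).} For a $(1,1)$--tensor I compute
$$
(L_{\partial_r}\mathrm{S})(X)=L_{\partial_r}(\mathrm{S}X)-\mathrm{S}(L_{\partial_r}X).
$$
I then replace each Lie derivative by a covariant derivative using~\eqref{eq800}, namely $L_{\partial_r}Y=\nabla_{\partial_r}Y-\mathrm{S}(Y)$. This gives
$$
(L_{\partial_r}\mathrm{S})(X)=\nabla_{\partial_r}(\mathrm{S}X)-\mathrm{S}^2X-\mathrm{S}(\nabla_{\partial_r}X)+\mathrm{S}^2X=(\nabla_{\partial_r}\mathrm{S})(X).
$$
The two $\mathrm{S}^2$ terms cancel, so the Lie and covariant derivatives of $\mathrm{S}$ along $\partial_r$ agree. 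The second equality in~(3) is then exactly the radial curvature equation, Proposition~\ref{equazionidellacurvatura}(1).

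\emph{Item (4).} Since $\nabla_{\partial_r}g=0$, I have
$$
(\nabla_{\partial_r}\mathrm{Hess}\,r)(X,Y)=g((\nabla_{\partial_r}\mathrm{S})X,Y).
$$
Substituting~(3), this equals
$$
-\mathrm{Hess}^2r(X,Y)+g(\Rm(\partial_r,X)\partial_r,Y).
$$
By the antisymmetry of $\Rm$ in its first two slots, $g(\Rm(\partial_r,X)\partial_r,Y)=\Riem(\partial_r,X,\partial_r,Y)=-\Riem(X,\partial_r,\partial_r,Y)$, which is the claim.

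\emph{Item (5).} Here I take the Lie derivative of the product $\mathrm{Hess}\,r(X,Y)=g(\mathrm{S}X,Y)$ by the Leibniz rule:
$$
L_{\partial_r}\mathrm{Hess}\,r(X,Y)=(L_{\partial_r}g)(\mathrm{S}X,Y)+g((L_{\partial_r}\mathrm{S})X,Y).
$$
By item~(2), the first term equals $2\,\mathrm{Hess}\,r(\mathrm{S}X,Y)=2\,\mathrm{Hess}^2r(X,Y)$. By item~(3), the second term equals $-\mathrm{Hess}^2r(X,Y)-\Riem(X,\partial_r,\partial_r,Y)$. Adding them and moving $\mathrm{Hess}^2r$ to the left-hand side gives~(5).

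There is no real obstacle in any of these steps. The only points needing care are the sign conventions, namely the ordering in $\Rm(\partial_r,\cdot)\partial_r$ against $\Riem(X,\partial_r,\partial_r,Y)$, and the Leibniz rules for Lie derivatives of tensors in items~(3) and~(5).
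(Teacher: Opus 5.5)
Your proof is correct. Each item checks out: the cancellation of the $\mathrm{S}^2$ terms gives $L_{\partial_r}\mathrm{S}=\nabla_{\partial_r}\mathrm{S}$, the sign identity $g(\Rm(\partial_r,X)\partial_r,Y)=-\Riem(X,\partial_r,\partial_r,Y)$ holds, and the radial curvature equation may be used for all $X$ on $U$, since for $X=\partial_r$ both sides vanish by Lemma~\ref{partialrautovettorediS}.

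The paper gives no proof of its own and only cites Petersen. Your argument is the standard direct computation from $\nabla g=0$, torsion-freeness, relation~\eqref{eq800} and the radial curvature equation. Your Leibniz step in (5) is equivalent to adding $\mathrm{Hess}\,r(\mathrm{S}X,Y)+\mathrm{Hess}\,r(X,\mathrm{S}Y)=2\,\mathrm{Hess}^2r(X,Y)$ to (4).
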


The second equation shows how the Hessian of the distance function $r$ ``controls'' the metric $g$, while the fourth and fifth equation tell us the way the curvature ``influence'' such Hessian. Now,
 the vector fields satisfying $\nabla_{\partial_r}X=0$ and $L_{\partial_r}X=0$ are particularly useful in order to exploit such information. It is a consequence of the following corollary.

\begin{corollario}\label{efgrcor} 
We have the following equalities
\begin{align*}
\partial_r g(X,Y)=&\,g(\nabla_{\partial_r}X,Y)+g(X,\nabla_{\partial_r}Y)\\
=&\,2 \mathrm{Hess}\,r(X,Y)+g(L_{\partial_r}X,Y)+g(X,L_{\partial_r}Y)\\
\partial_r\mathrm{Hess}\,r(X,Y)=&\, - \mathrm{Hess}^{2}r(X,Y)-\Riem(X,\partial_{r},\partial_{r},Y)\\
&\,+\mathrm{Hess}\,r(\nabla_{\partial_r}X,Y)+\mathrm{Hess}\,r(X,\nabla_{\partial_r}Y)\\
=&\,\mathrm{Hess}^{2}r(X,Y)-\Riem(X,\partial_{r},\partial_{r},Y)\\
&\,+\mathrm{Hess}\,r(L_{\partial_r}X,Y)+\mathrm{Hess}\,r(X,L_{\partial_r}Y)
\end{align*}
for every pair $X,Y$ of vector fields on $U$.
\end{corollario}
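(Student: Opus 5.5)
The plan is to obtain each formula by expanding the derivative of a tensor evaluated on vector fields via the Leibniz rule. This is done once for the covariant derivative and once for the Lie derivative, and then Proposition~\ref{efgr} is used to replace the derivative of the tensor itself. The derivative of the scalar function $g(X,Y)$ or $\mathrm{Hess}\,r(X,Y)$ along $\partial_r$ can be computed either way, which is why each quantity gets two expressions.

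For the metric, the Leibniz rule for the covariant derivative gives
\begin{equation*}
\partial_r g(X,Y)=(\nabla_{\partial_r}g)(X,Y)+g(\nabla_{\partial_r}X,Y)+g(X,\nabla_{\partial_r}Y),
\end{equation*}
and the first term vanishes by item (1) of Proposition~\ref{efgr}. Similarly,
\begin{equation*}
\partial_r g(X,Y)=(L_{\partial_r}g)(X,Y)+g(L_{\partial_r}X,Y)+g(X,L_{\partial_r}Y),
\end{equation*}
since the Lie derivative of a function is just its directional derivative. Item (2) of Proposition~\ref{efgr} then gives the term $2\,\mathrm{Hess}\,r(X,Y)$. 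As a cross-check, one can also substitute $\nabla_{\partial_r}X=\mathrm{S}(X)+L_{\partial_r}X$ from~\eqref{eq800} into the first expression. Using $g(\mathrm{S}(X),Y)=\mathrm{Hess}\,r(X,Y)$ and the symmetry of $\mathrm{S}$, this produces the same result.

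For the Hessian, I apply the same two Leibniz expansions to the $(0,2)$-tensor $\mathrm{Hess}\,r$. The covariant version is
\begin{equation*}
\partial_r\mathrm{Hess}\,r(X,Y)=(\nabla_{\partial_r}\mathrm{Hess}\,r)(X,Y)+\mathrm{Hess}\,r(\nabla_{\partial_r}X,Y)+\mathrm{Hess}\,r(X,\nabla_{\partial_r}Y).
\end{equation*}
Substituting item (4) of Proposition~\ref{efgr}, read as an identity for $(\nabla_{\partial_r}\mathrm{Hess}\,r)(X,Y)$, gives the first displayed formula. The Lie version is analogous, and item (5) gives the second formula. Alternatively, the second formula follows from the first via~\eqref{eq800}. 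The two extra terms each contribute $\mathrm{Hess}\,r(\mathrm{S}(X),Y)=\mathrm{Hess}^2r(X,Y)$, using the symmetry $\mathrm{Hess}\,r(X,\mathrm{S}(Y))=g(\mathrm{S}(X),\mathrm{S}(Y))$. Together they turn $-\mathrm{Hess}^2 r$ into $+\mathrm{Hess}^2 r$, which confirms consistency with item (5).

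There is no real obstacle here. The only point requiring care is interpretation: items (4) and (5) of Proposition~\ref{efgr} must be read as statements about the derivative of the tensor evaluated at $(X,Y)$, and not as the derivative of the function $\mathrm{Hess}\,r(X,Y)$. One must also keep track of the symmetry of $\mathrm{S}$ so that the correction terms combine correctly.
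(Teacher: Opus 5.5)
Your proposal is correct and matches the argument the paper intends. The paper gives no separate proof and presents the corollary as an immediate consequence of Proposition~\ref{efgr}: you apply the Leibniz rules for $\nabla_{\partial_r}$ and $L_{\partial_r}$ to $g(X,Y)$ and $\mathrm{Hess}\,r(X,Y)$, then use items (1), (2), (4), (5) there. Your cross-check via~\eqref{eq800} and the symmetry of $\mathrm{S}$ correctly confirms that the two expressions for $\partial_r\mathrm{Hess}\,r(X,Y)$ agree.
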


\begin{definizione} Given a distance function $r:U\rightarrow \R$, we say that the vector field $J$ on $U$ is a {\em Jacobi vector field for $r$} if $L_{\partial_{r}} J=0$ and we say that $E$ is a {\em parallel vector field for $r$} if $\nabla_{\partial_{r}} E=0$ in $U$.
\end {definizione}

For every Jacobi field $J$ for $r$, there holds
\begin{equation}\label{fo:3}
\nabla_{\partial_r}J=\mathrm{S}(J)=\nabla_J\partial_r
\end{equation}
in $U$, by relation~\eqref{eq800}.\\
Assuming that the two vector fields in the Corollary~\ref{efgrcor} are Jacobi fields $J_1,J_2$ for $r$, we get (in the ``second lines'' of the equalities),
\begin{align}
\partial_r g(J_1,J_2)=&\,2 \mathrm{Hess}\,r(J_1,J_2)\label{JJJ1}\\
\partial_r\mathrm{Hess}\,r(J_1,J_2)=&\,\mathrm{Hess}^{2}r(J_1,J_2)-\Riem(J_1,\partial_{r},\partial_{r},J_2).
\end{align}
In particular, since $\partial_r$ is also a Jacobi field for $r$, 
\begin{equation}
\partial_r g(\partial_r,J)=2 \mathrm{Hess}\,r(\partial_r,J)=0\qquad\text{ and }\qquad\partial_r g(J,J)=2 \mathrm{Hess}\,r(J,J),\label{eq810}
\end{equation}
for every Jacobi field $J$ for $r$. This clearly implies that if $J$ is orthogonal to $\partial_r$ at some point on an integral curve for $\partial _r$, it is orthogonal to $\partial_r$ on the whole curve. Then, if $J$ is not pointwise proportional to $\partial_{r}$, there  holds
\begin{align}
\partial_r\mathrm{Hess}\,r(J,J)=&\,
\mathrm{Hess}^{2}r(J,J)-\Sec(J,\partial_{r}) g(J-g(J,\partial_{r})\partial_{r},J-g(J,\partial_{r})\partial_{r})\\
\geqslant&\,[\mathrm{Hess}\,r(J,J)]^2/g(J,J)-\Sec(J,\partial_{r}) g(J-g(J,\partial_{r})\partial_{r},J-g(J,\partial_{r})\partial_{r}),
\end{align}
where we used the elementary inequality
\begin{equation}
[\mathrm{Hess}\,r(X,X)]^2=[g(\mathrm{S}(X),X)]^2\leqslant g(\mathrm{S}(X),\mathrm{S}(X))g(X,X)=\mathrm{Hess}^{2}r(X,X)g(X,X),\label{disS}
\end{equation}
that hold for every vector field $X$ on $U$.\\
Thus, if $J$ is a Jacobi vector field for $r$ pointwise orthogonal to $\partial_r$, we have
\begin{align}
\partial_r\mathrm{Hess}\,r(J,J)\geqslant&\,[\mathrm{Hess}\,r(J,J)]^2/g(J,J)-\Sec(J,\partial_{r}) g(J,J)\label{eq:fo2}\\
\partial_{rr}^2 g(J,J)/2\geqslant&\,[\mathrm{Hess}\,r(J,J)]^2/g(J,J)-\Sec(J,\partial_{r}) g(J,J)\geqslant-\Sec(J,\partial_{r}) g(J,J).
\end{align}

Notice that $\partial_r=\nabla r$ is a parallel (and Jacobi) vector field for $r$. Moreover, for every parallel vector field for $r$, there holds
$$
L_{\partial_r}E=-\mathrm{S}(E)=-\nabla_E\partial_r
$$
in $U$, again by the relation~\eqref{eq800}.\\
Assuming that the two vector fields in the Corollary~\ref{efgrcor} are parallel fields $E_1,E_2$ for $r$, we get (in the ``first lines'' of the equalities),
\begin{align*}
\partial_r g(E_1,E_2)=&\,0\\
\partial_r\mathrm{Hess}\,r(E_1,E_2)=&\,-\mathrm{Hess}^{2}r(E_1,E_2)-\Riem(E_1,\partial_{r},\partial_{r},E_2).
\end{align*}
In particular, since $\partial_r$ is also a parallel field for $r$, 
$$
\partial_r g(\partial_r,E)=\partial_r g(E,E)=0
$$
for every parallel field $E$ for $r$. This clearly implies that if $E$ is orthogonal to $\partial_r$ at some point on an integral curve for $\partial _r$, it is orthogonal to $\partial_r$ on the whole curve and its norm is constant. Then, if $E$ is not pointwise proportional to $\partial_{r}$, there holds
$$
\begin{aligned}
\partial_r\mathrm{Hess}\,r(E,E)=&
-\mathrm{Hess}^{2}r(E,E)-\Sec(E,\partial_{r}) g(E-g(E,\partial_{r})\partial_{r},E-g(E,\partial_{r})\partial_{r})\\
\leqslant&-[\mathrm{Hess}\,r(E,E)]^2/g(E,E)-\Sec(E,\partial_{r}) g(E-\!g(E,\partial_{r})\partial_{r},E-\!g(E,\partial_{r})\partial_{r}),\label{eq813}
\end{aligned}
$$
using again inequality~\eqref{disS}.\\
Hence, if $E$ is a parallel vector field for $r$ which is pointwise orthogonal to $\partial_r$ and $g(E,E)=1$, we have
\begin{equation}
\partial_r\mathrm{Hess}\,r(E,E)\leqslant-[\mathrm{Hess}\,r(E,E)]^2-\Sec(E,\partial_{r}).\label{eq510}
\end{equation}

Assuming then a sign on the sectional curvature of $(M,g)$, by means of Jacobi/parallel fields for $r$, these formulas give differential information on $g$. More precisely, using parallel fields and the last formula above, we obtain information about the evolution of the Hessian along an integral curve of $\partial_r$ from the one on the sign of the sectional curvature. Afterwards, by using Jacobi fields, we ``transfer'' such information on the Hessian to the metric by means of equation~\eqref{JJJ1}. 

We can always choose local coordinate systems $(x^1,\dots,x^{n-1},r)$, given by the flow of the vector field $\nabla r$ (this can be done since $\Vert\nabla r\Vert=1$ in $U$). We call these coordinates {\em adapted to $r$}. In a coordinate system adapted to $r$, letting $J=a^\alpha\partial_\alpha+a^{r}\partial_{r}$, it follows that $J$ is a Jacobi field for $r$ if and only if the functions $a^\alpha, a^{r}$ are independent of $r$ (obviously this implies that the coordinate fields are Jacobi fields for $r$). More generally, in such coordinates the first--order linear PDEs of the Jacobi and parallel vector fields can be treated considering $r$ as a free variable and solving the associated ODEs. Thus, if we set (or we know) the value of a vector field on a hypersurface $\Sigma_\rho$, we can extend it (uniquely) at least locally, to a Jacobi or parallel vector field for $r$. This is known as the {\em method of characteristics} (see~\cite{evans2}, for instance). Therefore, if $J$ is a Jacobi field for $r$ not proportional to $\partial_r$ at a point of an integral curve of this latter, this holds for the whole curve. The same conclusion is true also for parallel fields for $r$.

In coordinates adapted to $r$, the metric is then given by
$$
g = g_{\alpha\beta} \, dx^\alpha\! \otimes dx^\beta+dr\otimes dr,
$$
hence the symmetric $n\times n$ matrix of its components has the form
$$ 
g=\left(\begin{matrix} g_{11}&\dots&g_{1n-1} &0\\ \dots& \dots & \dots& \dots \\ g_{n-11}&\dots&g_{n-1 n-1} &0\\ 0&\dots&0&1 \end{matrix}\right).
$$
Denoting with $\mathrm{h}_{ij}=\mathrm{Hess}\,r(\partial_i,\partial_j)$ the components of the Hessian of $r$ and recalling that $\mathrm{Hess}\,r(\partial_r,\cdot)=0$, we have $\mathrm{h}_{ij}=\mathrm{h}_{ji}$ and $\mathrm{h}_{rr}=\mathrm{h}_{\alpha r}=\mathrm{h}_{r\alpha}=0$, as a consequence,
\begin{equation}\label{eq700}
\mathrm{S}^{\beta}_{\alpha}=\mathrm{h}_{\alpha \gamma}g^{\gamma \beta} \quad \quad\text{ and }\quad\quad \mathrm{S}_{\alpha}^{r}= \mathrm{S}_{r}^{\alpha}= \mathrm{S}_{r}^{r}=0 .
\end{equation}
Hence, the $n\times n$ matrix of the components of $\mathrm S$ is
$$
\mathrm{S}=\left(\begin{matrix} \mathrm{S}_{1}^{1}& \dots & \mathrm{S}_{n-1}^{1}&  0\\ \dots& \dots & \dots& \dots  \\ \mathrm{S}_{1}^{n-1} & \dots & \mathrm{S}_{n-1}^{n-1}&  0\\  0 & \dots & 0 & 0\end{matrix}\right).
$$
Finally, we define the symmetric $(0,2)$--tensor 
$$
\mathrm R(X,Y)=\Riem(X,\partial_r,\partial_r,Y),
$$
with associated self--adjoint operator $R$ defined by $\mathrm R(X,Y)=g(R(X),Y)$, with component matrix
$$
R=\left(\begin{matrix} R_{1}^{1}& \dots & R_{n-1}^{1}&  0\\ \dots& \dots & \dots& \dots  \\ R_{1}^{n-1} & \dots & R_{n-1}^{n-1}&  0\\  0 & \dots & 0 & 0\end{matrix}\right)
$$
where
\begin{equation}\label{eq701}
R^{\beta}_{\alpha}=\mathrm R_{\alpha \gamma}g^{\gamma \beta} \quad \quad\text{ and }\quad\quad R_{\alpha}^{r}= R_{r}^{\alpha}= R_{r}^{r}=0,
\end{equation}
noticing that $\tr R=\Ric(\partial_r,\partial_r)$.

We can then rewrite some of the previous equations in these coordinates.

\begin{proposizione}\label{eqriempol}
The following equations hold
\begin{enumerate}\label{efgrincoordinatepolari}
\item $\partial_{r}g_{\alpha \beta}=2\mathrm h_{\alpha \beta}=2 \mathrm S_\alpha^\gamma g_{\gamma \beta}$
\item $\partial_{r} g^{\alpha \beta}=-2\mathrm h^{\alpha \beta}=-2 g^{\alpha \gamma} \mathrm h_{\gamma\lambda} g^{\lambda\beta}=-2 \mathrm S_\gamma^\beta g^{\gamma\alpha}$
\item $\partial_{r}\mathrm h_{\alpha \beta} = \mathrm h_{\alpha \gamma} g^{\gamma\lambda} \mathrm h_{\lambda\beta}-\Riem_{\alpha rr\beta }$
\item $\partial_{r}\mathrm S_\alpha^\beta = -\mathrm S_\alpha^\gamma \mathrm S_\gamma^\beta-R_\alpha^\beta$.
\end{enumerate}
\end {proposizione}
 
Letting $G=\sqrt{\det g_{ij}\,}=\sqrt{\det g_{\alpha\beta}\,}$ (the density of volume) and recalling that $\HHH=\tr \mathrm S$, the following proposition is then a straightforward consequence (using the arithmetic--quadratic mean inequality).

\begin{proposizione}\label{eqS}
There hold
\begin{enumerate}
\item $\partial_{r}\log G=\tr \mathrm S=\HHH$
\item $\partial_{r}\HHH=-\tr \mathrm S^{2}-\Ric(\partial_{r},\partial_{r})=-\Vert\mathrm{h}\Vert^2-\Ric(\partial_{r},\partial_{r})\leqslant\displaystyle{-\frac{\HHH^2}{n-1}-\Ric(\partial_{r},\partial_{r})}$.
\end{enumerate}
\end{proposizione}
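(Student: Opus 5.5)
Both identities follow by working in coordinates $(x^1,\dots,x^{n-1},r)$ adapted to $r$ and taking traces in Proposition~\ref{eqriempol}. Throughout, Greek indices run over $1,\dots,n-1$. By the block form of $g$ we have $G=\sqrt{\det g_{\alpha\beta}\,}$, so everything reduces to the $(n-1)\times(n-1)$ matrices $g_{\alpha\beta}$, $\mathrm S^\beta_\alpha$ and $R^\beta_\alpha$.

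For the first identity, Jacobi's formula for the derivative of a determinant gives
$$
\partial_r\log G=\tfrac12\,\partial_r\log\det g_{\alpha\beta}=\tfrac12\, g^{\alpha\beta}\partial_r g_{\alpha\beta}.
$$
Substituting item (1) of Proposition~\ref{eqriempol}, $\partial_r g_{\alpha\beta}=2\mathrm S^\gamma_\alpha g_{\gamma\beta}$, we get $\tfrac12 g^{\alpha\beta}\cdot 2\mathrm S^\gamma_\alpha g_{\gamma\beta}=\mathrm S^\alpha_\alpha$. By \eqref{eq700} the remaining entries of $\mathrm S$ vanish, so $\mathrm S^\alpha_\alpha=\tr\mathrm S=\HHH$.

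For the second identity, note that $\HHH=\mathrm S^\alpha_\alpha$ is the trace of a $(1,1)$-tensor, so differentiating componentwise is legitimate. Tracing item (4) of Proposition~\ref{eqriempol} with $\alpha=\beta$ gives
$$
\partial_r\HHH=-\mathrm S^\gamma_\alpha\mathrm S^\alpha_\gamma-R^\alpha_\alpha=-\tr\mathrm S^2-\tr R.
$$
By \eqref{eq701} we have $\tr R=\Ric(\partial_r,\partial_r)$, and we already noted that $\tr\mathrm S^2=\Vert\mathrm h\Vert^2$.

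For the inequality, restrict $\mathrm S$ to $T\Sigma_\rho$, which is $(n-1)$-dimensional. Because $\mathrm S$ is $g$-self-adjoint there, it has real eigenvalues $\lambda_1,\dots,\lambda_{n-1}$. Then $\tr\mathrm S^2=\sum\lambda_i^2$ and $\HHH=\sum\lambda_i$, and Cauchy--Schwarz gives $\bigl(\sum\lambda_i\bigr)^2\leqslant (n-1)\sum\lambda_i^2$. Hence $-\Vert\mathrm h\Vert^2\leqslant-\HHH^2/(n-1)$. It is important to use $n-1$ rather than $n$ here; this is valid because $\mathrm S(\partial_r)=0$ by Lemma~\ref{partialrautovettorediS}, so $\partial_r$ contributes a zero eigenvalue that adds nothing to either trace.

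None of these steps is a real obstacle. The only points needing care are the self-adjointness of $\mathrm S$, which justifies real diagonalization, and the reduction of dimension to $n-1$.
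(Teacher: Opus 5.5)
Your proof is correct and follows the paper's route. The paper presents this proposition as a direct consequence of Proposition~\ref{eqriempol}: trace items (1) and (4), using Jacobi's formula for $\partial_r\log\det g_{\alpha\beta}$ and $\tr R=\Ric(\partial_r,\partial_r)$, and then apply the arithmetic--quadratic mean inequality to the $n-1$ eigenvalues of the self-adjoint operator $\mathrm S|_{T\Sigma_\rho}$, which is exactly your argument.
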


Examining the system of ODEs for the matrices $g_{\alpha \beta}$ and $\mathrm S_\alpha^\beta$, seen as functions of $r$ (by evaluating them along an integral curve of $\partial_r$ which is a geodesic parametrized in arclength),
$$
\begin{cases}
\,\partial_r g_{\alpha \beta }=2 {\mathrm S}_\alpha^\gamma g_{\gamma \beta }\\
\,\partial_r {\mathrm S}_\alpha ^\beta =-{\mathrm S}_\alpha^\gamma {\mathrm S}_\gamma^\beta-R_\alpha^\beta
\end{cases}
$$
given by the first and the last equations in Proposition~\ref{eqriempol}, we notice that it is partially decoupled, that is, $g_{\alpha\beta}$ is not present in the second equation, which is of {\em Riccati type} (quadratic), while the first equation is linear.\\
Then, if $\mu_{\min}=\mu_{\min}(r)$ and $\mu_{\max}=\mu_{\max}(r)$ are the minimum and maximum eigenvalues of the matrix $\mathrm S_\alpha^\beta(r)$, with relative $g$--unit eigenvectors $v=v(r)=v^{\alpha}\partial_{\alpha}$ and $w=w(r)=w^{\beta}\partial_{\beta}$ respectively, these functions are locally Lipschitz and satisfy the ODEs
\begin{align}
\mu_{\min}'=&\, -\mu_{\min}^2-R_\alpha^\beta v^\alpha v^\gamma g_{\gamma \beta}=\, -\mu_{\min}^2-R_{\alpha\gamma} v^\alpha v^\gamma \geqslant-\mu_{\min}^2-{\mathrm{MaxSec}} \label{eq333-0}\\
\mu_{\max}'=&\, -\mu_{\max}^2-R_\alpha^\beta w^\alpha w^\gamma g_{\gamma \beta}=\,-\mu_{\max}^2-R_{\alpha \gamma} w^\alpha w^\gamma \leqslant -\mu_{\max}^2-{\mathrm{MinSec}}, \label{eq333}
\end{align}
for almost every $r$, where ${\mathrm{MinSec}}/{\mathrm{MaxSec}}$ denote the minimal/maximal sectional curvature along the integral curve of $\partial_r$ that we are considering. Indeed, the matrix $\mathrm S_\alpha^\beta$ is similar to the matrix $\widetilde{\mathrm S}_\alpha^\beta$ obtained by representing $\mathrm S$ with respect to some smooth orthonormal frame $\{E_1,\,\dots, E_{n-1}, \partial_r\}$. The matrix $\widetilde{\mathrm S}_\alpha^\beta$ is symmetric and has smooth entries, hence, by~\cite[Section~6, Chapter~2]{Kato} there exist $n-1$ functions $\mu_i(r)$, at least of class $C^1$, that represent the (possibly repeated) eigenvalues of $\widetilde{\mathrm S}_\alpha^\beta(r)$, the same of $\mathrm S_\alpha^\beta(r)$, therefore, considering the function $\mu(i,r)=\mu_i(r)$ and following the proof of~\cite[Theorem~2.1.1]{Manlib}, we can conclude the local Lipschitz continuity of $\mu_{\max}(r)=\max_{i=1,\dots,n-1}\mu(i,r)$. Similarly we can prove the local Lipschitz continuity of $\mu_{\min}(r)$. Moreover, for every value $r_0$ at which the function $\mu_{\min}(r)$ is differentiable, we can consider the auxiliary function $\varphi(r)=\mathrm S_\alpha^\beta(r)v^\alpha(r_0) v^\gamma(r_0)g_{\gamma \beta}(r_0)-\mu_{\min}(r)$, having $r_0$ as a point of minimum, consequently
$$
\mu_{\min}'(r_0)=\, -\mu_{\min}^2(r_0)-R_{\alpha\gamma}(r_0) v^\alpha(r_0) v^\gamma(r_0)\geqslant \, -\mu_{\min}^2(r_0)-{\mathrm{MaxSec}}.
$$
Similarly, we can obtain the differential inequality~\eqref{eq333} and analogously, there hold
\begin{align}
\lambda'_{\min}=&\,2 \lambda_{\min}{\mathrm S}^\beta_\alpha y^\alpha y^\beta\geqslant 2\lambda_{\min}\mu_{\min} \label{eq333-0_bis}\\
\lambda'_{\max}=&\,2 \lambda_{\max}{\mathrm S}^\beta_\alpha z^\alpha z^\beta \leqslant 2\lambda_{\max}\mu_{\max} \label{eq333-bis},
\end{align}
for almost every $r$, where $\lambda_{\min}=\lambda_{\min}(r)$ and $\lambda_{\max}=\lambda_{\max}(r)$ are the minimum and maximum eigenvalues of the matrix $g_{\alpha\beta}$ relative to the unit eigenvectors $y=y(r)$ and $z=z(r)$, respectively.

Thus, if the curvature of $(M,g)$ is bounded below, the inequality~\eqref{eq333}, which gives $\mu_{\max}'\leqslant -{\mathrm{MinSec}}$, implies that $\mu_{\max}$ is bounded above in every bounded interval (by integrating this last differential inequality and noticing that this can be done by the absolute continuity of $\mu_{\max}$). Then, $\mu_{\max}$ could possibly go to $-\infty$ (hence, {\em a fortiori} also $\mu_{\min}$), but it cannot diverge to $+\infty$.
Therefore, by inequality~\eqref{eq333-bis}, the positive function $\lambda_{\max}$, hence $\lambda_{\min}$, is  also bounded above in every bounded interval. Accordingly, the only possible ``degeneration''  is that $\lambda_{\min}$ goes to zero as $r\to r_0^-$, for some $r_0$, meaning that the matrix $g_{\alpha\beta}$ becomes degenerate when $r$ approaches the value $r_0$.\\
This is clearly equivalent to the fact that the volume density $G$ goes to zero, as $r\to r_0^-$, indeed, an analogous analysis can be performed considering the equations for the pair $G$ and $\HHH$, in Proposition~\ref{eqS}.  

If $(M,g)$ is complete, $r$ is the distance function from a point $p\in M$ and $\lambda_{\min} \to 0$ as $r\to r_0^{-}$ along a geodesic $\sigma:[0,+\infty]\to M$ outgoing from $p$, parametrized in arclength and minimizing along the interval $[0,r_0)$, then $\sigma$ no longer minimizes the distance beyond $r_0$. Indeed, there exists a map $r\mapsto(v^{1},\dots,v^{n-1})(r)\in \R^{n-1}$ such that $|v(r)|_{\R^{n-1}}=1$ for all $r\in(0,r_{0})$ and
$$
\left(\begin{matrix} g_{11}&\dots& g_{1n-1} \\  \dots& \dots & \dots \\ g_{n-1  1}&\dots&g_{n-1  n-1} \end{matrix}\right)(r)\left(\begin{matrix}v^{1}\\ \cdots \\ v^{n-1}\end{matrix}\right)(r) \to 0,\ \ \  \text {as}  \ \ \ r\to r_0^{-}.
$$
Hence,
$$
g_{\sigma(r)}\bigl(v^{\alpha}(r)\partial_{\theta^{\alpha}}|_{\sigma(r)},v^{\beta}(r)\partial_{\theta^{\beta}}|_{\sigma(r)}\bigr)=v^{\alpha}(r)v^{\beta}(r)g_{\alpha \beta}(r) \to 0,
$$
as $r\to r_0^{-}$. Then, there is a sequence $r_k\to r_0^-$ and there exists a vector $(\overline v^{1},\dots,\overline v^{n-1})\in \R^{n-1}$ such that $|\overline v|_{\R^{n-1}}=1$ and $(v^{1},\dots,v^{n-1})(r_k)\to (\overline v^{1},\dots,\overline v^{n-1})$, as $k\to\infty$. Then, since there holds
$$
\partial_{\theta^{\alpha}}|_{\sigma(r)}=d(\exp_{p})_{r\sigma'(0)}(r \eta_{\alpha}),
$$  
on $(0,r_{0})$, where $\eta_{1},\dots,\eta_{n-1}$ are linearly independent vectors in $T_pM$ which form a basis of $T_{\sigma'(0)}\SSSS^{n-1}_{p}$, by the continuity of $d\exp_p$ and of the metric $g$, it follows that 
$$
d(\exp_{p})_{r_{0}\sigma'(0)}(\overline v^{\alpha}\eta_{\alpha})=0.
$$
The components $\overline v^{\alpha}$ are not all zero and $\eta_{1},\dots,\eta_{n-1}$ are linearly independent, therefore, $d\exp_p$ is singular at $r_{0}\sigma'(0)$. Hence, $\sigma(r_0)$ is a {\em conjugate point} to $p$ along $\sigma$. Accordingly, the geodesic $\sigma$ cannot be minimal after $\sigma(r_0)$. See~\cite[Section~4]{mant4}, for more information about the set of focal points of a submanifold.

\smallskip

In the next section we will introduce the last ``analytical'' tools which, together with the above differential equations and differential inequalities, we will apply to the study of the relationships between curvature and topology. More precisely, we will analyze the behavior of the distance function $r$ from a point, according to the following lines:

\begin{enumerate}
\item If the sectional curvature $\Sec$ of the Riemannian manifold is bounded from above by a constant $K$, then the minimum of the eigenvalues of the shape operator $\mathrm S$ of $r$ is greater than or equal to a quantity depending only on the constant $K$ and on the value of the function $r$ at the point considered (by using the Jacobi fields for $r$). In the case that $K$ is nonpositive, this quantity is always positive (this is used in the proof of the key Proposition~\ref{CH} for the Cartan--Hadamard theorem).
\item If the sectional curvature $\Sec$ of the Riemannian manifold is bounded from below by a constant $k$, then the maximum of the eigenvalues of the shape operator $\mathrm S$ of $r$ is lower than or equal to a quantity depending only on the constant $k$ and on the value of $r$ at the point considered (by using parallel fields for $r$). If $k$ is positive, this quantity diverges negatively as $r\to r_{0}^-$, therefore the matrix of the coefficients of the metric becomes degenerate and thus all the geodesics parametrized in arclength cease to be minimizing in a same interval of length $r_{0}$. Consequently, the diameter of the manifold is finite (Synge's lemma~\ref{Synge}).
\item If the sectional curvature is constant, then the eigenvalues of the shape operator are all equal to a same quantity depending only on the constant of the sectional curvature and on the value of $r$ at the point considered. Therefore, in a Riemannian manifold with constant sectional curvature, in geodesic balls, diffeomorphic via polar coordinates, the coefficients of the metric of both manifolds satisfy the same system of first order linear differential equations along the integral curves of the gradient of respective distance functions, with the same limit conditions (this will used in Theorem~\ref{constsec}).
\item If the eigenvalues of the Ricci tensor $\Ric$ are bounded from below by a positive constant $l$, we can not obtain information on the singular eigenvalues of the shape operator $\mathrm S$ of $r$  but only on their sum, namely on $\tr(\mathrm S)$, which is bounded from above by a quantity depending on the constant $l$ and on the value of $r$ at the point considered. This quantity diverges negatively for $r\to r_{0}^-$ and therefore the diameter of manifold is finite by an analogous argument of the one at point $(2)$ (Bonnet--Myers theorem~\ref{Myers--Bonnet}).  
\end{enumerate}

\section{The distance function from a point}\label{The distance function from a point}

Let $(M,g)$ a complete, $n$--dimensional Riemannian manifold. We now consider $r:M\to\R$ to be the distance function $d_p$ from a point $p\in M$, which is well known to be smooth in the open set $D_p=M\setminus\bigl(\{p\}\cup \mathrm{Cut}_p\bigr)$, where $\mathrm{Cut}_p$ is the cut~locus of $p$ (we recall that the {\em squared} distance function from $p$, given by $d_p^2$ is smooth also at $p$ and its Hessian is positive definite in a neighborhood of such point). Notice that the polar coordinates $(\theta, r)$ defined on an open $U\subseteq D_p$ are then adapted to $r$ and every $U_\rho$ is the part of the geodesic sphere of radius $\rho>0$, centered at $p$, belonging to $U$. 
Then, the metric tensor can be written as
$$
g =  g_{\alpha \beta} \, d\theta^\alpha\! \otimes d\theta^\beta+dr\otimes dr.
$$
As we said, the coordinate vector fields $ \partial_{\theta^{1}}, \dots,\partial_{\theta^{n-1}},\partial_{r}$ are Jacobi vector fields for $r$, while $\partial_r$ is a parallel vector field for $r$.\\
We recall how such polar coordinates are defined. Let $\SSSS^{n-1}_p\subseteq T_pM$ to be the unit sphere with respect to $g_p$. 
From now on, we fix a linear isometry $\rchi$ from $(T_pM,g_{p})$ to $(\R^{n},\langle\cdot\,,\cdot\rangle)$ which maps the vectors of a chosen orthonormal basis $\mathcal{B}=\{e_{1},\dots,e_{n}\}$ of $T_{p}M$ in the vectors of the canonical orthonormal basis of $\R^{n}$ and we consider a chart  $(V,\theta)$ of $\SSSS^{n-1}_p$, obtained identifying $\SSSS^{n-1}_p$ with the $(n-1)$--dimensional standard unit sphere $\SSSS^{n-1}$ (whose canonical metric is denoted by $g_{\SSSS^{n-1}}$) through the isometry $\rchi$. For some $\varepsilon>0$ the open set 
$$
U=\bigl\{\exp_p(r\xi)\,:\, \text{$r\in(0,\varepsilon)$ and $\xi\in V$}\bigr\}
$$
is contained in $D_p$, therefore the map
$$
\phi:(\xi,r)\in V\times(0,\varepsilon)\subseteq\SSSS^{n-1}_p\times\R^+\longmapsto\exp_{p}(r\xi)\in U\subseteq M
$$
is a diffeomorphism. This, in turn, implies that the map
$$
\phi\circ (\theta^{-1},\id):(\theta^{1},\dots,\theta^{n-1},r)\in\theta (V)\times(0,\varepsilon)\longmapsto\exp_{p}\big(r\xi(\theta)\big)\in U
$$
is also a diffeomorphism whose inverse $\varphi$ provides polar coordinates on $U$.\\
At the same time, we can consider on the open set $U$ the chart $x$ given by the normal coordinates with respect to the fixed orthonormal basis $\mathcal{B}$ of $T_{p}M$. Notice that transition map from $\varphi$ to $x$ is given by 
$$
x\circ\varphi^{-1}: (\theta,r)\in \theta (V)\times(0,\varepsilon)\subseteq\R^{n-1}\times\R^+\longmapsto (r\xi^{1}(\theta),\dots,r\xi^{n}(\theta))\in \R^{n}.
$$

In order to get information from the previous differential equations and inequalities, we first analyze the behavior of the metric and of the Hessian of the distance function $r=d_p$ along any (minimal) geodesic $r\mapsto \gamma(r)=\exp_{p}(r\xi)$ in $U$ (which is an integral curve for $\partial_r$), as $r\to 0^+$.

\begin{proposizione}\label{ppp} The following limits hold along any unit velocity geodesic $r\mapsto \gamma(r)=\exp_{p}(r\xi)$, for $\xi\in\SSSS^{n-1}_p$,

\medskip
 
\begin{enumerate}
\item  \ \\
\vskip-1,6cm
$$
\lim_{r \to 0^{+}} g_{\alpha\beta}=\lim_{r \to 0^{+}} g(\partial_{\theta^\alpha},\partial_{\theta^\beta})=0
$$
for each $\alpha,\beta\in\{1,\dots,n-1\}$. Consequently
$$
\lim_{r \to 0^{+}} g(J_1,J_2)=0
$$
for every pair $J_{1},J_{2}$ of Jacobi fields for the distance function $r$, pointwise orthogonal to $\partial_{r}$, being $J_1,J_2$ pointwise linear combinations of the coordinate fields $\partial_{\theta^\alpha}$ with coefficients independent of $r$.\\ 

\item \ \\
\vskip-1,6cm
$$
\lim_{r \to 0^{+}} \Big(\mathrm{Hess}\,r (\partial_{\theta^\alpha},\partial_{\theta^\beta})-\frac{g_{\alpha\beta}}{r}\Big)=0
$$
for each $\alpha,\beta\in\{1,\dots,n-1\}$. Consequently
\begin{equation}\label{hesslim1}
\lim_{r \to 0^{+}} \Big(\mathrm{Hess}\,r (J_{1},J_{2})-\frac{g(J_{1},J_{2})}{r}\Big)=0
\end{equation}
for each pair $J_{1},J_{2}$ of Jacobi fields for the distance function $r$, pointwise orthogonal to $\partial_{r}$.\\ 

\item \ \\
\vskip-1,6cm
$$
\lim_{r \to 0^{+}} \Big(\frac{\mathrm{Hess}\,r(J,J)}{g(J,J)}-\frac{1}{r}\Big)=0\label{hesslim2}
$$
for every Jacobi field $J$ for the distance function $r$, which is pointwise orthogonal to $\partial_{r}$ and never zero on the geodesic curve $\gamma$.\\ 

\item \ \\
\vskip-1,6cm
$$
\lim_{r \to 0^{+}}\Big( \frac{\HHH}{n-1}-\frac{1}{r}\Big)=0.
$$
\\

\item \ \\
\vskip-1,6cm
$$
\lim_{r \to 0^{+}} \Big(\mathrm{Hess}\,r(E,E) -\frac{1}{r}\Big)=0
$$
for every parallel field $E$ for the distance function $r$, with unit norm and pointwise orthogonal to $\partial_{r}$.
\end{enumerate}
\end{proposizione}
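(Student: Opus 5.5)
We will work in normal coordinates $x$ around $p$, where the metric satisfies $g_{ij}(x)=\delta_{ij}+O(|x|^2)$ and the Christoffel symbols satisfy $\Gamma_{ij}^k(x)=O(|x|)$. Everything then reduces to Taylor expansions along the ray $x=r\xi$.

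\emph{Step (1).} By the transition map $x\circ\varphi^{-1}(\theta,r)=r\xi(\theta)$, we have
$$
\partial_{\theta^\alpha}=r\,\frac{\partial\xi^i}{\partial\theta^\alpha}\,\partial_{x^i}.
$$
Hence
$$
g_{\alpha\beta}=r^2\,\partial_\alpha\xi^i\,\partial_\beta\xi^j\,g_{ij}(r\xi)=r^2\bigl((g_{\SSSS^{n-1}})_{\alpha\beta}(\theta)+O(r^2)\bigr)\to0.
$$
For Jacobi fields orthogonal to $\partial_r$, we write $J=a^\alpha\partial_{\theta^\alpha}$ with $r$--independent coefficients, which immediately gives the second part.

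\emph{Step (2).} We use Proposition~\ref{eqriempol}(1), $\mathrm h_{\alpha\beta}=\tfrac12\partial_r g_{\alpha\beta}$, together with the expansion above. Since $g_{ij}(r\xi)=\delta_{ij}+O(r^2)$ holds along with its $r$--derivative $O(r)$ (smoothness of $g_{ij}$ in normal coordinates), we get
$$
g_{\alpha\beta}=r^2\sigma_{\alpha\beta}+r^2 q_{\alpha\beta}(r),\qquad q=O(r^2),\quad q'=O(r),
$$
where $\sigma=g_{\SSSS^{n-1}}$. Therefore
$$
\mathrm h_{\alpha\beta}=r\sigma_{\alpha\beta}+rq_{\alpha\beta}+\tfrac{r^2}{2}q'_{\alpha\beta}
\qquad\text{and}\qquad
\frac{g_{\alpha\beta}}{r}=r\sigma_{\alpha\beta}+rq_{\alpha\beta}.
$$
Their difference is $O(r^3)\to0$. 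Bilinearity with constant coefficients $a^\alpha$ gives \eqref{hesslim1}.

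\emph{Step (3).} For $J=a^\alpha\partial_{\theta^\alpha}$ never zero, the vector $a(\theta)$ is nonzero on $\gamma$, so $g(J,J)=r^2(\sigma(a,a)+O(r^2))$ with $\sigma(a,a)>0$. The numerator difference computed in Step (2) is $O(r^3)$, so dividing by $g(J,J)\sim r^2$ gives $O(r)\to0$. This division is the only delicate point: it requires that the \emph{error} in Step (2) is $o(r^2)$, not merely $o(1)$. That is why we track the orders explicitly rather than just passing to the limit.

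\emph{Step (4).} Since $\HHH=\mathrm h_{\alpha\beta}g^{\alpha\beta}$ and
$$
g^{\alpha\beta}=r^{-2}\bigl(\sigma^{\alpha\beta}+O(r^2)\bigr),
$$
we obtain
$$
\HHH=\bigl(\tfrac1r g_{\alpha\beta}+O(r^3)\bigr)g^{\alpha\beta}=\frac{n-1}{r}+O(r).
$$

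\emph{Step (5).} A unit parallel field $E$ orthogonal to $\partial_r$ is, along $\gamma$, the parallel transport of a unit vector $e\perp\xi$. We write $E=b^\alpha(r)\partial_{\theta^\alpha}$, where $b$ now depends on $r$, and we must control $b$. In normal coordinates, $E^i(r)=e^i+O(r^2)$, because $\dot E^i=-\Gamma^i_{jk}\xi^jE^k=O(r)$. Since $\partial_{\theta^\alpha}$ has components $r\,\partial_\alpha\xi^i$, we get $b^\alpha=r^{-1}(c^\alpha+O(r^2))$ with $c$ constant. Then
$$
\mathrm{Hess}\,r(E,E)=b^\alpha b^\beta\mathrm h_{\alpha\beta}=b^\alpha b^\beta\Bigl(\frac{g_{\alpha\beta}}{r}+O(r^3)\Bigr)=\frac{g(E,E)}{r}+O(r)=\frac1r+O(r).
$$
An alternative for Step (5) would be to use the symmetry of $\mathrm S$ and the eigenvalue bounds: by Step (2), $\mathrm S^\beta_\alpha-\delta^\beta_\alpha/r=O(r)$ in operator norm with respect to $g$. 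The operator-norm formulation is cleaner, since
$$
\Bigl|\mathrm{Hess}\,r(E,E)-\frac1r\Bigr|\leqslant\Bigl\Vert \mathrm S-\frac{\id}{r}\Bigr\Vert_g
$$
for any unit $E\perp\partial_r$. I would actually prove Steps (3)--(5) all from this operator estimate. The main obstacle is getting the $O(r)$ operator-norm bound: it comes from
$$
\mathrm S-\frac{\id}{r}=g^{-1}\Bigl(\mathrm h-\frac{g}{r}\Bigr),
$$
where the right-hand factor is $O(r^3)$ entrywise and $g^{-1}$ is $O(r^{-2})$, but uniformly comparable to $r^{-2}\sigma^{-1}$.
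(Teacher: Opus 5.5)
Your proof is correct. Its skeleton is the paper's: normal coordinates, $\partial_{\theta^\alpha}=r\,\partial_\alpha\xi^i\partial_i$, $g_{\alpha\beta}\sim r^2$ and $g^{\alpha\beta}\sim r^{-2}$. The central Step (2), however, is done differently.

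The paper computes $\mathrm{Hess}\,r(\partial_{\theta^\alpha},\partial_{\theta^\beta})$ directly from the coordinate formula $\mathrm{Hess}\,r(\partial_i,\partial_j)=\delta_{ij}/r-x^ix^j/r^3-\Gamma^k_{ij}x^k/r$ (with $r=|x|$) together with $\xi^i\partial_\alpha\xi^i=0$. This gives the error $r\,\partial_\alpha\xi^i\partial_\beta\xi^j\bigl[\delta_{ij}-g_{ij}-r\Gamma^k_{ij}\xi^k\bigr]$. The division by $g(J,J)\sim r^2$ is then handled through de l'H\^opital, using $\frac1r(g_{ij}-\delta_{ij})\to0$ and $\Gamma\to0$. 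In (5) the paper simply asserts that $r b^\alpha\partial_\alpha\xi^i$ converges.

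You instead read off $\mathrm h_{\alpha\beta}$ from $\partial_r g_{\alpha\beta}=2\mathrm h_{\alpha\beta}$ (Proposition~\ref{eqriempol}, which does not depend on the present statement). The error is then exactly $\frac{r^2}{2}\partial_\alpha\xi^i\partial_\beta\xi^j\,\xi^k\partial_k g_{ij}(r\xi)=O(r^3)$, with no Christoffel symbols. The two error terms coincide, as one sees by differentiating the Gauss lemma identity $g_{ij}(x)x^j=x^i$.

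What your route buys:
\begin{enumerate}
\item Explicit rates, namely $O(r)$ in (3)--(5), rather than bare limits.
\item Through the operator-norm bound $\Vert \mathrm S-\id/r\Vert_g=O(r)$, a single estimate that yields (3)--(5) at once. It also gives directly the expansion \eqref{eq11111} of $\mathrm S_\alpha^\beta$, which the paper later needs for $\mu_{\max}$ and $\mu_{\min}$ in Lemma~\ref{Synge} and Proposition~\ref{CH}, and which it justifies only by ``similar computations''.
\item Your ODE argument $\dot E^i=-\Gamma^i_{jk}\xi^jE^k=O(r)$ makes explicit the control of $b^\alpha$ that the paper leaves implicit.
\end{enumerate}
In exchange, the paper's route is a self-contained coordinate computation that does not pass through the evolution equations of Section~\ref{eriemsec}.
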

\begin{proof}
Let us denote by $\{\partial_{1},\dots,\partial_{n}\}$ the coordinate vector fields associated with the normal coordinates chart $(U,x)$ and by $\{\partial_{\theta^{1}},\dots,\partial_{\theta^{n-1}},\partial_{r}\}$ those induced by the polar coordinates chart $(U,\varphi)$, where both charts are like above. Moreover, 
let $\xi=\xi(\theta)$. The first limit is direct consequence of the fact that 
$$
g_{\alpha \beta}(\gamma(r))=O(r^2),
$$
since there holds
\begin{equation}
g_{\alpha \beta}(\gamma(r))=g_{ij}(\gamma(r))\frac{\partial(x^{i}\circ\varphi^{-1})}{\partial\theta^{\alpha}}(\theta,r)\frac{\partial(x^{j}\circ\varphi^{-1})}{\partial\theta^{\beta}}(\theta,r)
=\,r^{2}g_{ij}(\gamma(r))\frac{\partial \xi^{i}}{\partial\theta^{\alpha}}(\theta)\frac{\partial \xi^{j}}{\partial\theta^{\beta}}(\theta),\label{eq710}
\end{equation}
for every $\alpha,\beta\in\{1,\dots,n-1\}$.

About the second limit, we start by observing that
\begin{align*}
\mathrm{Hess}\,r\big(\partial_{\theta^{\alpha}}|_{\gamma(r)},\partial_{\theta^{\beta}}|_{\gamma(r)}\big)
=&\,\,\mathrm{Hess}\,r\Bigl(r  \frac{\partial\xi^{i}}{\partial\theta^{\alpha}}(\theta)\partial_{i}|_{\gamma(r)},r\frac{\partial\xi^{j}}{\partial\theta^{\beta}}(\theta)\partial_{j}|_{\gamma(r)}\Bigr)\\
=&\, r^{2}\frac{\partial\xi^{i}}{\partial\theta^{\alpha}}(\theta)\frac{\partial\xi^{j}}{\partial\theta^{\beta}}(\theta)\,\mathrm{Hess}\,r\big(\partial_{i}|_{\gamma(r)},\partial_{j}|_{\gamma(r)}\big)\\
=&\,r\frac{\partial\xi^{i}}{\partial\theta^{\alpha}}(\theta)\frac{\partial\xi^{j}}{\partial\theta^{\beta}}(\theta)\Big[\delta_{ij}-\xi^{i}(\theta)\xi^{j}(\theta)-r\Gamma^{k}_{ij}(\gamma(r))\xi^{k}(\theta)\Big]\\
=&\, r\frac{\partial\xi^{i}}{\partial\theta^{\alpha}}(\theta)\frac{\partial\xi^{j}}{\partial\theta^{\beta}}(\theta)\Big[\delta_{ij}-r \Gamma^{k}_{ij}(\gamma(r))\xi^{k}(\theta)\Big],
\end{align*}
where we used the identities
$$
\mathrm{Hess}\,r(\partial_{i}, \partial_{j})=\frac{\delta_{ij}}{r}-\frac{x^{i}x^{j}}{r^{3}}- \Gamma^{k}_{ij}\frac{x^{k}}{r}\qquad\text{ and }\qquad\xi^{i}(\theta) \frac{\partial\xi^{i}}{\partial\theta^{\alpha}}(\theta)=0.
$$
Hence, by equality~\eqref{eq710}, we have
$$
\mathrm{Hess}\,r(\partial_{\theta^{\alpha}}|_{\gamma(r)},\partial_{\theta^{\beta}}|_{\gamma(r)})-\frac{1}{r}g_{\alpha \beta}(\gamma(r))=r\frac{\partial\xi^{i}}{\partial\theta^{\alpha}}(\theta)\frac{\partial\xi^{j}}{\partial\theta^{\beta}}(\theta)\Big[\delta_{ij}-g_{ij}(\gamma(r))-r \Gamma^{k}_{ij}(\gamma(r))\xi^{k}(\theta)\Big]
$$
and by the properties of the normal coordinates, we obtain the second limit.

Let us consider a nonzero linear combination $J$ of $\partial_{\theta^{1}},\dots,\partial_{\theta^{n-1}}$, that is $J=a^{\alpha }\partial_{\theta^{\alpha}} $ with some coefficient $a_\alpha\neq 0$.
Then, we have 
\begin{align}
 \frac{\mathrm{Hess}\,r(J,J)}{g(J,J)}\,\bigg|_{\gamma(r)}-\frac{1}{r}&=\frac{1}{r}\Bigg(\frac{  a^{\alpha}a^{\beta}\frac{\partial\xi^{i}}{\partial\theta^{\alpha}}(\theta)\frac{\partial\xi^{j}}{\partial\theta^{\beta}}(\theta)\big[\delta_{ij}-r \Gamma^{k}_{ij}(\gamma(r))\xi^{k}(\theta)\big]}{a^{\alpha}a^{\beta}\frac{\partial\xi^{i}}{\partial\theta^{\alpha}}(\theta)\frac{\partial\xi^{j}}{\partial\theta^{\beta}}(\theta) g_{ij}(\gamma(r))}-1\Bigg)\\
&=\frac{ a^{\alpha}a^{\beta}\frac{\partial\xi^{i}}{\partial\theta^{\alpha}}(\theta)\frac{\partial\xi^{j}}{\partial\theta^{\beta}}(\theta)\,\frac{1}{r} [\delta_{ij}- g_{ij}(\gamma(r))-r \Gamma^{k}_{ij}(\gamma(r))\xi^{k}(\theta)]\,}{a^{\alpha}a^{\beta}\frac{\partial\xi^{i}}{\partial\theta^{\alpha}}(\theta)\frac{\partial\xi^{j}}{\partial\theta^{\beta}}(\theta) g_{ij}(\gamma(r))}.
\end{align}
Now, we observe that de~l'H\^opital rule, together with the properties of the normal coordinates, implies
\begin{equation}\label{eqf:0}
\lim_{r \to 0^{+}}\frac{1}{r}\Big[g_{ij}(\gamma(r))-\delta_{ij}\Big]=\lim_{r \to 0^{+}}\xi^{k}(\theta)\frac{\partial g_{ij}}{\partial x^{k}}(\gamma(r))=0.
\end{equation}
Then, the third limit follows by observing that 
\begin{equation}\label{ffeq1}
\lim_{r \to 0^{+}}a^{\alpha}a^{\beta}\frac{\partial\xi^{i}}{\partial\theta^{\alpha}}(\theta)\frac{\partial\xi^{j}}{\partial\theta^{\beta}}(\theta) g_{ij}(\gamma(r))=
a^{\alpha}a^{\beta}\frac{\partial\xi^{i}}{\partial\theta^{\alpha}}(\theta)\frac{\partial\xi^{j}}{\partial\theta^{\beta}}(\theta)\delta_{ij}=a^{\alpha}a^{\beta} g_{\alpha\beta}^{\SSSS^{n-1}_p}(\xi)>0.
\end{equation}

By the previous computations, we have
\begin{eqnarray*}
\HHH({\gamma(r)})-\frac{n-1}{r}&=&\tr\mathrm S({\gamma(r)})-\frac{n-1}{r}\\
&=&g^{\alpha \beta}(\gamma(r))\Big(\mathrm{Hess}\,r (\partial_{\theta^{\alpha}}|_{\gamma(r)},\partial_{\theta^{\beta}}|_{\gamma(r)})-\frac{1}{r}g_{\alpha \beta}(\gamma(r))\Big)\\
&=&r g^{\alpha \beta}(\gamma(r))\frac{\partial \xi^{i}}{\partial\theta^{\alpha}}(\theta)\frac{\partial \xi^{j}}{\partial\theta^{\beta}}(\theta)\Big[\delta_{ij}-g_{ij}(\gamma(r))-r\Gamma^{k}_{ij}(\gamma(r))\xi^{k}(\theta)\Big].
\end{eqnarray*}
Since the identities~\eqref{eq710} and~\eqref{ffeq1} imply 
$$
g_{\alpha \beta}(\gamma(r))=r^{2}a_{\alpha\beta}(r,\theta)\quad\quad \text{and}\quad\quad \lim_{r \to 0^{+}}a_{\alpha\beta}(r,\theta)=g_{\alpha\beta}^{\SSSS^{n-1}_p}(\xi),
$$
we get that
$$\HHH({\gamma(r)})-\frac{n-1}{r}=a^{\alpha \beta}(r,\theta)\frac{\partial \xi^{i}}{\partial\theta^{\alpha}}(\theta)\frac{\partial \xi^{j}}{\partial\theta^{\beta}}(\theta)\,\frac{1}{r}\Big[\delta_{ij}-g_{ij}(\gamma(r))-r \Gamma^{k}_{ij}(\gamma(r))\xi^{k}(\theta)\Big], $$
where each $a^{\alpha \beta}(r,\theta)$ converges to $g^{\alpha\beta}_{\SSSS^{n-1}_p}(\xi)$ as $r\to 0^+$.
Then, the fourth limit follows from what was said earlier.\\
Concerning the last limit, let $E$ be a unit parallel vector field for the distance function $r$, pointwise orthogonal to $\partial_{r}$, then
$$
E=b^{\alpha}\partial_{\theta^{\alpha}}= r b^{\alpha} \frac{\partial\xi^{i}}{\partial\theta^{\alpha}}\, \partial_{i}.
$$
Now, we notice that, since $E(\gamma(r))$ is a parallel vector field along the curve $\gamma|_{(0,\varepsilon)}$, the functions $r b^{\alpha}(\gamma(t))(\partial\xi^{i}/\partial\theta^{\alpha})(\theta)$ converge as $r \rightarrow 0^{+}$, therefore, we have
\begin{align*}
\Big(\mathrm{Hess}\,r\,(E,E) -\frac{1}{r}\Big)&\,\bigg|_{\gamma(r)}=b^{\alpha}(\gamma(r))b^{\beta}(\gamma(r))\Big(\mathrm{Hess}\,r(\partial_{\theta^{\alpha}}|_{\gamma(r)},\partial_{\theta^{\beta}}|_{\gamma(r)})-\frac{1}{r}g_{\alpha \beta}(\gamma(r))\Big)\\
=&\,r^2b^{\alpha}(\gamma(r))b^{\beta}(\gamma(r))\frac{\partial\xi^{i}}{\partial\theta^{\alpha}}(\theta)\frac{\partial\xi^{j}}{\partial\theta^{\beta}}(\theta)\,\frac{1}{r}\Big[\delta_{ij}-g_{ij}(\gamma(r))-r \Gamma^{k}_{ij}(\gamma(r))\xi^{k}(\theta)\Big],
\end{align*}
which yields the desired limit.
\end{proof}

Let us introduce the last ``analytical'' tool, which is the following general result for differential inequalities.

\begin{proposizione}[Riccati comparison principle]\label{riccati}
Given two $C^1$ functions $\rho_1,\rho_2:(0,b)\rightarrow \R$ such that 
\begin {enumerate}
\item $\rho_{1}'+\rho_{1}^{2}\leqslant\rho_{2}'+\rho_{2}^{2}$\\

\item $\lim_{t \to 0^{+}} \bigl(\rho_i(t)-\frac{1}{t}\bigr)=0$
\end{enumerate}

\smallskip

\noindent for $i\in\{1,2\}$, we have $\rho_{2}(t)\geqslant \rho_{1}(t)$ for every $t\in(0,b)$.
\end{proposizione}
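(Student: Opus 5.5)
The plan is to linearize the Riccati inequality through a Sturm-type argument. Set $w=\rho_2-\rho_1$. Subtracting the two sides of hypothesis (1) gives
$$
w'+(\rho_2+\rho_1)w\geqslant 0 \qquad\text{on } (0,b).
$$
This is a linear differential inequality for $w$ with continuous coefficient $\rho_1+\rho_2$.

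First I build an integrating factor. Fix $t_0\in(0,b)$ and define
$$
F(t)=\exp\Big(\int_{t_0}^t(\rho_1+\rho_2)(s)\,ds\Big)>0 .
$$
Then $(Fw)'=F\bigl(w'+(\rho_1+\rho_2)w\bigr)\geqslant 0$, so $Fw$ is nondecreasing on $(0,b)$. Consequently, for every $t\in(0,b)$,
$$
F(t)w(t)\geqslant \lim_{s\to0^+}F(s)w(s),
$$
provided this limit exists and equals $0$. That is the whole point: once it holds, $w(t)\geqslant0$ everywhere, which is $\rho_2\geqslant\rho_1$.

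The main obstacle is showing $\lim_{s\to0^+}F(s)w(s)=0$ from hypothesis (2) alone. By (2), $\rho_i(s)=1/s+o(1)$, hence $\rho_1+\rho_2=2/s+o(1)$. Integrating from $s$ to $t_0$ gives
$$
\int_{t_0}^s(\rho_1+\rho_2)=2\log(s/t_0)+O(1)
$$
as $s\to0^+$, because the $o(1)$ term is bounded near $0$ and therefore integrable. So $F(s)=s^2\,e^{O(1)}$, that is, $F(s)\leqslant Cs^2$ for small $s$. Also $w(s)=\rho_2(s)-\rho_1(s)=o(1)$ by (2). Therefore $F(s)w(s)=o(s^2)\to0$.

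Combining this with the monotonicity of $Fw$ gives $F(t)w(t)\geqslant0$ for all $t\in(0,b)$. Since $F>0$, $w\geqslant0$, which is the claim. I only need $w\to0$ together with $F$ bounded near $0$, which is much less than the $s^2$ decay actually available, so there is room to spare. The one point to check carefully is that the $o(1)$ remainders in (2) make $\int_{t_0}^s(\rho_1+\rho_2-2/\sigma)\,d\sigma$ bounded as $s\to0^+$. This holds because a bounded continuous function on $(0,t_0]$ is integrable.
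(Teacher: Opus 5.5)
Your proof is correct and is essentially the paper's argument. The paper uses the same integrating factor $e^{f}$ with $f(t)=\int_{t_0}^t(\rho_1+\rho_2)\,d\tau$ to show that $(\rho_2-\rho_1)e^{f}$ is nondecreasing, then lets the lower endpoint go to $0$ using $f(t)=2\log(t/t_0)+O(1)$ and $\rho_2-\rho_1=o(1)$. Your remark that $\rho_1+\rho_2-2/\sigma$ extends continuously to $[0,t_0]$, and hence has bounded integral, supplies the justification the paper leaves implicit for the $O(1)$ term.
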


From this proposition it clearly follows that if two functions $\rho_1,\rho_2:(0,b)\rightarrow \R$ satisfy
$$
\rho_{1}'+\rho_{1}^{2}=\rho_{2}'+\rho_{2}^{2}\qquad\text{ and }\qquad\lim_{t \to 0^{+}} \Bigl(\rho_i(t)-\frac{1}{t}\Bigr)=0,
$$
for $i\in\{1,2\}$, then $\rho_{1}=\rho_{2}$.

\begin{proof}
We choose $t_{0}\in(0,b)$ and define
$$
f(t)=\int ^{t}_{t_{0}} \bigl(\rho_{2}(\tau)+\rho_{1}(\tau)\bigr)\,d\tau
$$
for any $ t\in (0,b)$.
Since the first hypothesis leads to
$$
\frac {d}{dt} \big((\rho_{2}-\rho_{1})e^{f}\big)=(\rho_{2}'-\rho_{1}'+\rho_{2}^{2}-\rho_{1}^{2})e^{f}\geqslant0,
$$
the function $(\rho_{2}-\rho_{1})e^{f}$ is nondecreasing in $(0,b)$.
It then follows that
\begin{equation}\label{eq600}
\rho_{2}(t_2)-\rho_{1}(t_2)\geqslant \bigl(\rho_{2}(t_{1})-\rho_{1}(t_{1})\bigr)e^{f(t_{1})-f(t_2)},
\end{equation}
for every $0<t_{1}<t_2<b$.
Now, by the second hypothesis,
$\rho_i(t)=t^{-1}+o_t(1)$ for each $i\in\{1,2\}$, where the expression $o_t(1)$ denotes any function converging to zero as $t\to 0^+$. Then, we observe that 
\begin{equation}\label{eq601}
\rho_{2}(t)-\rho_{1}(t)=o_t(1)
\end{equation}
and 
\begin{equation}\label{eq601bis}
f(t)=\int ^{t}_{t_{0}} \bigl(\rho_{2}(\tau)+\rho_{1}(\tau)\bigr)\,d\tau=2\log\Big(\frac{t}{t_0}\Big)+ O_t(1),
\end{equation}
where the expression $O_t(1)$ is used to denote any function bounded in a right neighborhood of $0$. 
Therefore, the function $e^{f(t)}$ converges to zero as $t\to 0^+$. By using this result along with equality~\eqref{eq601} in inequality~\eqref{eq600}, we get
$$
\rho_{2}(t_2)-\rho_{1}(t_2)\geqslant \lim_{t_1\to0^+}\bigl(\rho_{2}(t_{1})-\rho_{1}(t_{1})\bigr)e^{f(t_{1})-f(t_2)}=0,
$$
which yields the desired inequality.
\end{proof}

Given a real constant $k$, we define the function ${\mathrm{sn}}_{k}:\R\to\R$ as
\begin{equation}\label{SNK}
{\mathrm{sn}}_{k}(t) =
\begin{cases}
{\displaystyle{\sin \big(\sqrt{k\,}t\big)/\sqrt{k\,}}} &  \text{ for $k>0$} \\
{\displaystyle{t}}  & \text{ for $k=0$}\\
{\displaystyle{\sinh \big(\sqrt{-k}t\big)/\sqrt{-k}}}  &  \text{ for $k<0$}
\end{cases}
\end{equation}
which can be easily seen to satisfy the ordinary differential equation
$$
f''+kf=0
$$
with $f(0)=0$ and $f'(0)=1$. Then, adopting the convention that ${\pi}/{\sqrt{k\,}}=+\infty$ when $k\leqslant 0$, by direct check, the function ${{\mathrm{sn}}_{k}'(t)}/{{\mathrm{sn}}_{k}(t)}$ on the open interval $(0,{\pi}/{\sqrt{k\,}})$ satisfies the ordinary differential equation
$$
f'+f^{2}=-k
$$
and
$$
\lim_{t \to 0^{+}} \Bigl(\frac{{\mathrm{sn}}_{k}'(t)}{{\mathrm{sn}}_{k}(t)}-\frac{1}{t}\Bigr)=0.
$$

Applying then the Riccati comparison principle, we obtain the following inequalities along any minimal geodesic $\gamma$ from $p\in M$, as above:

\begin{enumerate}
\item If $(M,g)$ has sectional curvature $\Sec\leqslant K$, then from formulas~\eqref{eq810} and~\eqref{eq:fo2}, we have
$$
\partial_{r}\Big( \frac{\mathrm{Hess}\,r(J,J)}{g(J,J)}\Big)+\Big(\frac{\mathrm{Hess}\ r(J,J)}{g(J,J)}\Big)^{2} \geqslant-K=\Big(\frac{{\mathrm{sn}}_K'}{{\mathrm{sn}}_K}\Big)^\prime+\Big(\frac{{\mathrm{sn}}_K'}{{\mathrm{sn}}_K}\Big)^{2}, 
$$
for every nonzero Jacobi vector field $J$ for the distance function $r$, which is at every point a linear combination of $\partial_{\theta^{1}},\dots,\partial_{\theta^{n-1}}$ (hence at every point orthogonal to $\partial_r$). Recalling the third limit in Proposition~\ref{ppp}, we can then apply Proposition~\ref{riccati} and conclude that
\begin{equation}\label{Jaceq-1}
\frac{\mathrm{Hess},r(J,J)}{g(J,J)}(r)\geqslant \frac{{\mathrm{sn}}_K'(r)}{{\mathrm{sn}}_K(r)},
\end{equation}
implying
\begin{equation}\label{Jaceq}
{\mathrm{Hess}}\,r\geqslant\frac{{\mathrm{sn}}_K'(r)}{{\mathrm{sn}}_K(r)}\,g
\end{equation}
on $T_qS_r(p)$, where $q=\exp_p(rv)$ and $S_r(p)=\exp_p(r\SSSS^{n-1}_p)$, for every $r\in (0,\varepsilon)$ in the domain of the function ${\mathrm{sn}}_K'/{\mathrm{sn}}_K$.
\item If $(M,g)$ has sectional curvature $\Sec\geqslant k$, then from the formula~\eqref{eq510} we have
$$
\partial_{r} \mathrm{Hess}\,r(E,E)+[\mathrm{Hess}\,r(E,E)]^{2}\leqslant - k=\Big(\frac{{\mathrm{sn}}_k'}{{\mathrm{sn}}_k}\Big)^\prime+\Big(\frac{{\mathrm{sn}}_k'}{{\mathrm{sn}}_k}\Big)^{2}, 
$$
for any parallel vector field $E$ for the distance function $r$ with unit norm and pointwise orthogonal to $\partial_{r}$. Then, by the fifth limit in Proposition~\ref{ppp} and Proposition~\ref{riccati}, we get
\begin{equation}\label{Pareq-1}
\mathrm{Hess},r(E,E)(r)\leqslant \frac{{\mathrm{sn}}_k'(r)}{{\mathrm{sn}}_k(r)},
\end{equation}
implying
\begin{equation}\label{Pareq}
{\mathrm{Hess}}\,r\leqslant\frac{{\mathrm{sn}}_k'(r)}{{\mathrm{sn}}_k(r)}\,g
\end{equation}
on $T_qS_r(p)$, where $q=\exp_p(rv)$ and $S_r(p)=\exp_p(r\SSSS^{n-1}_p)$, for every $r\in (0,\varepsilon)$ in the domain of the function ${\mathrm{sn}}_k'/{\mathrm{sn}}_k$.
\item If $(M,g)$ satisfies $\Ric\geqslant k(n-1)g$ for a real value $k$, from the second point of Proposition~\ref{eqS}, we get 
$$
\partial_{r}\Big(\frac{\HHH}{n-1}\Big)+\Big(\frac{\HHH}{n-1}\Big)^{2}\leqslant-k.
$$
Then, by the fourth limit in Proposition~\ref{ppp}, Proposition~\ref{riccati} implies
\begin{equation}\label{trSeq}
\Delta r=\HHH(r)\leqslant (n-1)\frac{{\mathrm{sn}}_k'(r)}{{\mathrm{sn}}_k(r)},
\end{equation}
for every $r\in (0,\varepsilon)$ in the domain of the function ${\mathrm{sn}}_k'/{\mathrm{sn}}_k$.
\end{enumerate}
The cases with $K$ or $k$ equal to zero are particularly relevant:
\begin{enumerate}
\item If $(M,g)$ has nonpositive sectional curvature $\Sec\leqslant0$, then
\begin{equation}\label{Jaceqbis}
{\mathrm{Hess}}\,r\geqslant\frac{g}{r}
\end{equation}
on $T_qS_r(p)$, with $q=\exp_p(rv)$ and $r\in (0,\varepsilon)$.\\

\item If $(M,g)$ has nonnegative sectional curvature $\Sec\geqslant0$, then
\begin{equation}
{\mathrm{Hess}}\,r\leqslant\frac{g}{r}
\end{equation}
on $T_qS_r(p)$, with $q=\exp_p(rv)$ and $r\in (0,\varepsilon)$.\\
\item If $(M,g)$ satisfies $\Ric\geqslant0$, then
\begin{equation}\label{Hconfr}
\Delta r=\HHH(r)\leqslant\frac{n-1}{r}
\end{equation}
for every $r\in (0,\varepsilon)$.\\
\end{enumerate}
It follows that, if $(M,g)$ has constant curvature $k$, there holds
\begin{equation}\label{Jconst}
{\mathrm{Hess}}\,r=\frac{{\mathrm{sn}}_k'(r)}{{\mathrm{sn}}_k(r)}\,g
\end{equation}
on $T_qS_r(p)$, with  $q=\exp_p(rv)$ and ${\mathrm{Hess}}\,r(\partial_r,\cdot)=0$, for every $r\in (0,\min\{\varepsilon,\pi/\sqrt{k\,}\})$ (setting $\pi/\sqrt{k\,}=+\infty$, if $k\leqslant0$).

Being able to cover all $D_{p}\setminus\{p\}$ by polar coordinate systems as above, choosing appropriate open sets $U$, we conclude that these inequalities hold along all geodesics $\gamma_\xi$, for every $\xi\in \SSSS^{n-1}_{p}$ and for every $r\in (0,c(\xi))$ (where $c(\xi)$ is the first value such that $\gamma_{\xi}$ reaches the cut~locus of $p$) within the domain of the corresponding functions, ${\mathrm{sn}}_K'/{\mathrm{sn}}_K$ or ${\mathrm{sn}}_k'/{\mathrm{sn}}_k$.

\medskip

The {\em space form} $(\mathbb{M}_k^n,\mathrm{g}_k^n)$ is given by $(\R^n,g_{\mathrm{can}})$, when $k=0$, the sphere $(\SSSS^n_{1/\sqrt{k\,}},g_{\mathrm{can}})$, when $k>0$ and the hyperbolic space $\HH^n$ with its canonical metric rescaled by the factor $-1/k$, when $k<0$.\\
For any $p\in\mathbb{M}_k^n$, the open set obtained by $\mathbb{M}_k^n$ without $p$ and the cut~locus of $p$, is isometric, with its natural metric, to the following warped product:
\begin{equation}\label{baseSF}
(M^{n}_{k},g^{n}_{k})=\big(\SSSS^{n-1}\times(0,\pi/\sqrt{k\,}\,),{\mathrm{sn}}_{k}^{2}(r)\,g_{\SSSS^{n-1}}+dr\otimes dr\big)
\end{equation}
with the convention 
\begin{equation}\label{convention}
{\pi}/{\sqrt{k\,}}=
\begin{cases}
{\pi}/{\sqrt{k\,}} &  \text{ for $k>0$} \\
+\infty  & \text{ for $k\leqslant 0$}
\end{cases}
\end{equation}

\begin{osservazione}\label{remarkHk}
We observe that the right members of inequalities~\eqref{Jaceq},~\eqref{Pareq} and~\eqref{trSeq} are the analogous quantities to the ones at the left members, for the space forms with constant curvature. In particular, if we denote with $\HHH_k(r)$ the mean curvature of the geodesic sphere of radius $r$ of the space form with constant curvature $k\in\R$, we have the following {\em mean comparison inequality}
\begin{equation}\label{trSeqbis}
\HHH(r)\leqslant \HHH_k(r)\,,
\end{equation}
along any minimal geodesic $\gamma$ from $p\in M$, as above, under the assumption that satisfies $\Ric\geqslant k(n-1)g$.
\end{osservazione}

\begin{osservazione}\label{OSSdistgen}
Along the same line, based on Riccati comparison principle, analogous conclusions (suitably modified) can also be drawn if $r$ is the distance function from a submanifold, or a general distance function (see~\cite[Section~3]{agray}).
\end{osservazione}

\section{Applications}\label{curvtop}

In this section we are going to apply the ``analytical'' tools (differential equations and inequalities) that we discussed in the previous sections, to show ``alternative'' proofs of some ``classical'' results about the relationships between curvature and topology, which are usually proved by means of the Jacobi vector fields and the second variation formula of the length/energy functional for geodesics. The ``standard'' proofs can be found in~\cite{gahula}, for instance. We invite the reader to compare such proofs in order to evaluate the differences in the complexity of the two lines.

\medskip

\noindent{\textbf{Riemannian manifolds with constant sectional curvature}\smallskip\\
The following proposition is the local version of the well known ``classification theorem'', asserting that any complete Riemannian manifold with constant curvature is the quotient of a space form (see~\cite[Section~3.F]{gahula}, for instance). 

\begin{proposizione}\label{constsec}
If the $n$--dimensional Riemannian manifold $(M,g)$ has constant sectional curvature $k$, then for every $p\in M$ there exists an open neighborhood of $p$ isometric to an open subset of the $n$--dimensional space form $(\mathbb{M}^n_k,\mathrm{g}_k^n)$ of constant curvature $k\in\R$.
\end{proposizione}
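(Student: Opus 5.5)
We use polar coordinates around $p$ and the ODE system for the metric coefficients, as anticipated in point (3) of the list at the end of Section~\ref{eriemsec}. Fix $p\in M$ and $\varepsilon>0$ smaller than the injectivity radius at $p$ and than $\pi/\sqrt{k\,}$ (with the convention~\eqref{convention}). Let $B=B_\varepsilon(p)$. Pick $\bar p\in\mathbb{M}^n_k$ and a linear isometry $I:(T_pM,g_p)\to(T_{\bar p}\mathbb{M}^n_k,\mathrm{g}_k^n|_{\bar p})$. The candidate isometry is
$$
F=\exp_{\bar p}\circ I\circ(\exp_p|_{B_\varepsilon(0)})^{-1}:B\to B_\varepsilon(\bar p).
$$
It is a diffeomorphism, because $\varepsilon$ is below both injectivity radii. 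It maps $p$ to $\bar p$ and sends polar coordinates $(\theta,r)$ on $M$ (built from a chart $\theta$ of $\SSSS^{n-1}_p$) to polar coordinates on $\mathbb{M}^n_k$ built from the chart $\theta\circ I^{-1}$ of $\SSSS^{n-1}_{\bar p}$. So in these charts $F$ is the identity. It remains to show that the metric components $g_{\alpha\beta}(\theta,r)$ of $g$ and $\bar g_{\alpha\beta}(\theta,r)$ of $\mathrm{g}^n_k$ coincide for $r\in(0,\varepsilon)$. Both metrics have $g_{rr}=1$ and $g_{\alpha r}=0$. Then $F$ is an isometry on $B\setminus\{p\}$, and by continuity also at $p$, where $dF_p=I$.

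\textbf{Step 1: the shape operators agree.} By~\eqref{Jconst}, constant curvature $k$ gives $\mathrm S=\frac{{\mathrm{sn}}_k'(r)}{{\mathrm{sn}}_k(r)}\,\id$ on $T_qS_r(p)$ and $\mathrm S(\partial_r)=0$. Formally~\eqref{Jconst} comes from combining~\eqref{Jaceq-1} and~\eqref{Pareq-1} with $K=k$. To turn the quadratic-form bounds into the statement about $\mathrm S$, we need to check that a parallel unit field $E$ and a Jacobi field $J$ can be chosen to pass through any prescribed tangent vector at $q$. Both fields can be prescribed on a hypersurface $\Sigma_\rho$ and extended by the method of characteristics. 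This gives $\mathrm{Hess}\,r(v,v)=\frac{{\mathrm{sn}}_k'}{{\mathrm{sn}}_k}g(v,v)$ for every $v\perp\partial_r$, and then polarization. The same formula holds in $\mathbb{M}^n_k$. Hence $\mathrm S_\alpha^\beta=\bar{\mathrm S}_\alpha^\beta=\frac{{\mathrm{sn}}_k'}{{\mathrm{sn}}_k}\delta_\alpha^\beta$.

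\textbf{Step 2: the linear ODE.} By the first equation of Proposition~\ref{eqriempol}, along each radial geodesic (fixed $\theta$) both matrices satisfy
$$
\partial_r g_{\alpha\beta}=2\frac{{\mathrm{sn}}_k'}{{\mathrm{sn}}_k}g_{\alpha\beta}.
$$
Therefore $\partial_r\bigl(g_{\alpha\beta}/{\mathrm{sn}}_k^2\bigr)=0$, and $g_{\alpha\beta}(\theta,r)={\mathrm{sn}}_k^2(r)\,c_{\alpha\beta}(\theta)$, and likewise for $\bar g_{\alpha\beta}$ with constants $\bar c_{\alpha\beta}(\theta)$.

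\textbf{Step 3: matching the initial data.} This is the main obstacle, since the ODE is singular at $r=0$ and the data must be read off as a limit. We use formula~\eqref{eq710}: $g_{\alpha\beta}(\gamma(r))=r^2g_{ij}(\gamma(r))\partial_\alpha\xi^i\partial_\beta\xi^j$ with $g_{ij}(p)=\delta_{ij}$, and ${\mathrm{sn}}_k(r)/r\to1$. Together these give
$$
c_{\alpha\beta}(\theta)=\lim_{r\to0^+}g_{\alpha\beta}/r^2=g^{\SSSS^{n-1}_p}_{\alpha\beta}(\xi(\theta)).
$$
The same computation in $\mathbb{M}^n_k$ yields $\bar c_{\alpha\beta}(\theta)=g^{\SSSS^{n-1}_{\bar p}}_{\alpha\beta}$ in the chart $\theta\circ I^{-1}$. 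Since $I$ is a linear isometry, this equals $c_{\alpha\beta}(\theta)$. Hence $g_{\alpha\beta}=\bar g_{\alpha\beta}={\mathrm{sn}}_k^2(r)\,g_{\SSSS^{n-1}}$, in agreement with the warped product~\eqref{baseSF}, and $F$ is an isometry of $B_\varepsilon(p)$ onto an open subset of $\mathbb{M}^n_k$.
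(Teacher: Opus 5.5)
Your proposal is correct and follows essentially the paper's proof. Both arguments use \eqref{Jconst} to get $\partial_r g_{\alpha\beta}=2\frac{{\mathrm{sn}}_k'}{{\mathrm{sn}}_k}g_{\alpha\beta}$, deduce that $g_{\alpha\beta}/{\mathrm{sn}}_k^2$ is constant along radial geodesics, and identify the constant through the limit coming from \eqref{eq710}. The differences are only in presentation: you build the isometry explicitly as $\exp_{\bar p}\circ I\circ\exp_p^{-1}$, which also covers the point $p$ itself by continuity, and you spell out why the quadratic-form bounds give the operator identity for $\mathrm S$, whereas the paper ends by recognizing the warped product \eqref{baseSF}.
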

\begin{proof}[Proof via the distance functions]\ \\
With the notation of the previous section, considering polar coordinates $(r,\theta)$ in a neighborhood of $p\in M$, we have $r=d_p$ and we concluded in equation~\eqref{Jconst} that
\begin{equation}\label{Jconst2}
{\mathrm{Hess}}\,r=\frac{{\mathrm{sn}}_k'(r)}{{\mathrm{sn}}_k(r)}\,g
\end{equation}
on $T_qS_r(p)$, for every $q=\exp_p(rv)\in S_r(p)$ and $v=v(\theta)$.\\
From the expression for the metric along the geodesic $r\mapsto\gamma(r)=\exp_{p}(rv)$ in the proof of the first point of Proposition~\ref{ppp}, we have
\begin{equation}\label{eq:fff0}
\lim_{r \to 0^{+}}\frac{g_{\alpha \beta}(\gamma(r))}{{\mathrm{sn}}^{2}_{k}(r)}=g_{\alpha \beta}^{\SSSS^{n-1}_p}\!(v)\,,
\end{equation}
for every $\alpha,\beta\in\{1,\dots,n-1\}$.\\
From the first equation of Proposition~\ref{eqriempol}, we have
$$
\partial_{r} g_{\alpha \beta}(\gamma(r))=2\mathrm{Hess}\,r(\partial_{\theta^{\alpha}},\partial_{\theta^{\beta}})=2\,\frac{{\mathrm{sn}}_{k}'(r)}{{\mathrm{sn}}_{k}(r)} g_{\alpha \beta}(\gamma(r))
$$
therefore,
$$
\frac{d\,}{dr} \frac{g_{\alpha \beta}(\gamma(r))}{{\mathrm{sn}}^2_k(r)}=\frac{{\mathrm{sn}}^2_k(r)\partial_rg_{\alpha \beta}(\gamma(r))-2{\mathrm{sn}}_k(r){\mathrm{sn}}'_k(r) g_{\alpha \beta}(\gamma(r))}{{\mathrm{sn}}^4_k(r)}= 0\,,
$$
that is, the function $r\mapsto\frac{g_{\alpha \beta}(\gamma(r))}{{\mathrm{sn}}^2_k(r)}$ is constant and we have
$$
g_{\alpha \beta}(\gamma(r))=g_{\alpha \beta}^{\SSSS^{n-1}_p}\!(v)\,{\mathrm{sn}}^{2}_{k}(r)\,,
$$
by the limit~\eqref{eq:fff0}. Hence,
$$
g={\mathrm{sn}}_{k}^{2}(r)\,g_{\SSSS^{n-1}_p}+dr\otimes dr\,,
$$
then, $(M,g)$ is locally isometric to $(\mathbb{M}^n_k, \mathrm{g}_k^n)$, by the discussion immediately before Remark~\ref{remarkHk}.
\end{proof}

\medskip

\noindent{\textbf{The Bonnet--Myers theorem and Synge's lemma}\smallskip\\
One of the most classical result in Riemannian geometry is the following Bonnet--Myers ``diameter estimate''.

\begin{teorema}[Bonnet--Myers theorem]\label{Myers--Bonnet}
Let $(M,g)$ be an $n$--dimensional complete Riemannian manifold such that $\Ric \geqslant k(n-1)g$, for some $k>0$. Then, the diameter of $M$ is bounded above by ${\pi}/{\sqrt{k\,}}$.
\end{teorema}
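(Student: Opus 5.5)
The plan is to fix a point $p\in M$ and show that no unit speed geodesic $\gamma_\xi(r)=\exp_p(r\xi)$, with $\xi\in\SSSS^{n-1}_p$, can stay free of the cut~locus of $p$ beyond length $\pi/\sqrt{k\,}$. In other words, we show $c(\xi)\leqslant\pi/\sqrt{k\,}$ for every $\xi$. Granting this, take any $q\in M$. By completeness there is a minimal unit speed geodesic from $p$ to $q$, and a minimal geodesic does not pass the cut point along it. Hence $d(p,q)\leqslant c(\xi)\leqslant\pi/\sqrt{k\,}$. Since $p$ and $q$ are arbitrary, $\mathrm{diam}(M)\leqslant\pi/\sqrt{k\,}$.

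To bound $c(\xi)$, I would argue by contradiction. Suppose $c(\xi)>\pi/\sqrt{k\,}$ for some $\xi$. Then $r=d_p$ is smooth along $\gamma_\xi$ on $(0,c(\xi))$, which strictly contains $(0,\pi/\sqrt{k\,}\,]$, and the polar coordinates near $\gamma_\xi$ are adapted to $r$.

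Along this geodesic, the mean comparison inequality~\eqref{trSeq} gives
$$
\HHH(r)\leqslant(n-1)\frac{{\mathrm{sn}}_k'(r)}{{\mathrm{sn}}_k(r)}=(n-1)\sqrt{k\,}\cot\bigl(\sqrt{k\,}r\bigr)\qquad\text{for } r\in(0,\pi/\sqrt{k\,}).
$$
This inequality was obtained from Proposition~\ref{eqS}, the fourth limit in Proposition~\ref{ppp} and the Riccati comparison principle (Proposition~\ref{riccati}). The right-hand side tends to $-\infty$ as $r\to(\pi/\sqrt{k\,})^-$. On the other hand, $\HHH=\Delta r$ is smooth, hence bounded, on a neighborhood of the point $\gamma_\xi(\pi/\sqrt{k\,})$, because that point lies in $D_p$. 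This is the contradiction.

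Equivalently, one can use $\partial_r\log G=\HHH$ to see that the volume density $G$ vanishes as $r\to(\pi/\sqrt{k\,})^-$. Integrating the inequality gives $G(r)\leqslant C\,{\mathrm{sn}}_k^{n-1}(r)$, so the matrix $g_{\alpha\beta}$ degenerates there. The discussion in Section~\ref{eriemsec} then shows that $\gamma_\xi(\pi/\sqrt{k\,})$ is conjugate to $p$, so the geodesic stops minimizing at or before that value.

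The main delicate point is making sure the comparison applies on the whole interval $(0,\pi/\sqrt{k\,})$ along $\gamma_\xi$, not merely on $(0,\varepsilon)$ for a small polar chart. For this I would use the remark that $D_p\setminus\{p\}$ is covered by polar coordinate charts, so the inequalities hold for all $r<c(\xi)$ in the domain of ${\mathrm{sn}}_k'/{\mathrm{sn}}_k$. The Riccati comparison is applied with $\rho_2={\mathrm{sn}}_k'/{\mathrm{sn}}_k$ and $\rho_1=\HHH/(n-1)$ on $(0,b)$, where $b=\min\{c(\xi),\pi/\sqrt{k\,}\}$. Once this is in place, the blow-up argument is immediate.
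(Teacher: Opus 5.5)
Your proof is correct and follows the paper's argument. The paper likewise assumes a minimizing unit-speed geodesic from $p$ of length $L>\pi/\sqrt{k\,}$ (equivalent to your assumption $c(\xi)>\pi/\sqrt{k\,}$), applies the mean curvature comparison \eqref{trSeq} along it, and reaches a contradiction because $\cot(\sqrt{k\,}r)\to-\infty$ as $r\to(\pi/\sqrt{k\,})^-$ while $\HHH$ stays finite at the smooth point $\gamma(\pi/\sqrt{k\,})$.
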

\begin{proof}[Proof via the distance functions]\ \\
Assume for contradiction that there exists a minimizing geodesic $\gamma : [0,L] \to M$ of unit speed starting from $p\in M$, with $L>{\pi}/{\sqrt{k\,}}$. The distance function $d_p$ is then smooth along the image of $\gamma$ and, for every $r\in (0,L)$, the geodesic sphere $S_r(p)$ is a hypersurface in a neighborhood of every point $\gamma(r)$. By the inequality~\eqref{trSeq}, along the image of $\gamma$ the mean curvature $\HHH(r)$ of the geodesic sphere $S_r(p)$ would satisfy
$$
\HHH(r)\leqslant(n-1) \sqrt{k\,}\cot(\sqrt{k\,} r)\,.
$$
Since $L>{\pi}/{\sqrt{k\,}}$, this would lead to a contradiction since $\cot(\sqrt{k\,} r) \to - \infty$ as $r\to({\pi}/\sqrt{k\,}\,)^-$.
\end{proof}

For completeness, we mention that this diameter estimate is optimal, as the standard $n$--dimensional sphere $\SSSS^{n}_{1/\sqrt{k\,}}$ of radius $1/\sqrt{k\,}$, has Ricci tensor equal to $(n-1)kg$ and diameter $\pi/\sqrt{k\,}$, moreover, {\em Cheng's maximal diameter theorem} (see~\cite{petersen2}) says that if $(M, g)$ satisfies the assumptions  of the Bonnet--Myers theorem and has (maximal) diameter equal to $\pi/\sqrt{k\,}$, then $(M, g)$ is isometric to $\SSSS^{n}_{1/\sqrt{k\,}}$ with its canonical metric. Finally, such estimate implies (by passing to the universal covering of the manifold) that the manifold has finite fundamental group, besides being compact.
\medskip

Even if the following Synge's lemma is clearly a special (weaker) case of the Bonnet--Myers theorem, we anyway prove it for historical reasons (it actually came before, obtained by Synge as a consequence of its second variation formula for the length functional~\cite{synge1}) and since in this way we take the opportunity to show a slightly different line, still based on the properties of the distance functions, that can be adapted to prove Theorem~\ref{Myers--Bonnet} also.

\begin{lemma}[Synge's lemma]\label{Synge}
Let $(M,g)$ be a complete Riemannian manifold with sectional curvature $\Sec(\pi)\geqslant k>0$ for every $2$--plane $\pi\subseteq T_pM$ and every $p\in M$. Then $M$ is compact, with diameter less than or equal to ${\pi}/{\sqrt{k\,}}$.
\end{lemma}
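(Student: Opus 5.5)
We plan to follow item (2) of the list at the end of Section~\ref{eriemsec}: use \emph{parallel} fields for $r=d_p$ rather than the mean curvature. This gives a proof that runs parallel to, but is independent of, the Bonnet--Myers argument.

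Fix $p\in M$. By completeness and Hopf--Rinow, any $q\in M$ is joined to $p$ by a minimizing unit speed geodesic $\gamma:[0,L]\to M$, with $L=d(p,q)$. Assume by contradiction that $L>\pi/\sqrt{k\,}$. Then $\gamma(r)$ lies in $D_p$ for every $r\in(0,L)$, because a minimizing geodesic does not meet the cut locus before its endpoint. Hence $r=d_p$ is smooth in a neighborhood of $\gamma((0,L))$, and it is covered by polar coordinate systems adapted to $r$.

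Choose a unit vector $e\in T_{\gamma(r_1)}M$ orthogonal to $\gamma'(r_1)$, for some small $r_1$. Extend it by the method of characteristics to a parallel vector field $E$ for $r$ near $\gamma$. By the discussion after Corollary~\ref{efgrcor}, $E$ has unit norm and stays orthogonal to $\partial_r$ along $\gamma$. The hypothesis $\Sec\geqslant k$ and inequality~\eqref{eq510} give
\[
\partial_r\mathrm{Hess}\,r(E,E)+[\mathrm{Hess}\,r(E,E)]^2\leqslant -k=\Bigl(\frac{{\mathrm{sn}}_k'}{{\mathrm{sn}}_k}\Bigr)'+\Bigl(\frac{{\mathrm{sn}}_k'}{{\mathrm{sn}}_k}\Bigr)^2 .
\]
Both sides have the asymptotics $1/r+o(1)$ as $r\to0^+$; for the left side this is the fifth limit of Proposition~\ref{ppp}. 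Proposition~\ref{riccati} then yields inequality~\eqref{Pareq-1}:
\[
\mathrm{Hess}\,r(E,E)(\gamma(r))\leqslant \sqrt{k\,}\cot(\sqrt{k\,}r)\qquad\text{for } r\in(0,\pi/\sqrt{k\,}).
\]

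The point needing care is that $\mathrm{Hess}\,r(E,E)$ is a smooth function on all of $(0,L)$, so it is bounded on a neighborhood of $r=\pi/\sqrt{k\,}$. The right-hand side, however, tends to $-\infty$ as $r\to(\pi/\sqrt{k\,})^-$, which is a contradiction. Hence $d(p,q)\leqslant\pi/\sqrt{k\,}$ for all $p,q$, so the diameter is at most $\pi/\sqrt{k\,}$.

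Alternatively, and more in the spirit of the remark on degeneration, we can argue through $\mu_{\max}\leqslant \sqrt{k\,}\cot(\sqrt{k\,}r)\to-\infty$. This forces $\lambda_{\max}$, and hence $g_{\alpha\beta}$, to degenerate before $\pi/\sqrt{k\,}$ by inequality~\eqref{eq333-bis}. That produces a conjugate point, so $\gamma$ cannot minimize beyond it. Either route works; we would present the first, since it is shorter.

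Compactness then follows: $M$ is complete and bounded, so by Hopf--Rinow it equals the closed ball $\overline{B}_{\pi/\sqrt{k\,}}(p)$, which is compact.

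The main technical obstacle is justifying that the Riccati comparison applies along the whole segment $(0,\min\{L,\pi/\sqrt{k\,}\})$ and not just in one small polar chart. The plan is to patch polar coordinate charts along $\gamma$, as done at the end of Section~\ref{The distance function from a point}. The parallel field $E$ is then determined along $\gamma$ by $\nabla_{\gamma'}E=0$, so the scalar function $\mathrm{Hess}\,r(E,E)(\gamma(r))$ is globally defined and $C^1$ on $(0,L)$.
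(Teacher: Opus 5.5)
Your proof is correct. It differs from the paper's proof of this lemma in the quantity you feed into the Riccati comparison.

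\textbf{The paper's route.} The paper works with the largest eigenvalue $\mu_{\max}$ of $\mathrm{S}_\alpha^\beta$ and the almost-everywhere inequality~\eqref{eq333}. This requires three extra ingredients: Kato's perturbation result to get local Lipschitz regularity of the eigenvalues, the asymptotics~\eqref{eq11111} for $\mathrm{S}_\alpha^\beta$, and an extension of Proposition~\ref{riccati} to absolutely continuous functions satisfying the inequality only almost everywhere.

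\textbf{Your route.} You use a single unit field $E$ that is parallel along $\gamma$ and orthogonal to $\gamma'$.
\begin{itemize}
\item Since $\gamma((0,L))\subset D_p$, and~\eqref{eq510} involves $E$ only along $\gamma$, the function $r\mapsto\mathrm{Hess}\,r(E,E)(\gamma(r))$ is smooth on $(0,L)$.
\item Proposition~\ref{riccati} therefore applies exactly as stated, with the behaviour near $0$ supplied by the fifth limit of Proposition~\ref{ppp}.
\item What you obtain is precisely inequality~\eqref{Pareq-1}, i.e.\ item (2) of the list at the end of Section~\ref{eriemsec}. The paper derives this inequality in Section~\ref{The distance function from a point} but does not use it in its proof of the lemma.
\end{itemize}

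\textbf{Comparison.} Your version is more elementary. It also states the final contradiction explicitly: the comparison function is continuous at $\pi/\sqrt{k\,}<L$, hence bounded there, while $\sqrt{k\,}\cot(\sqrt{k\,}r)\to-\infty$. The paper leaves this step implicit.

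The paper's version controls the whole second fundamental form at once. That is what ties it to the degeneration of $g_{\alpha\beta}$ and to conjugate points, and it is the template reused, via $\mu_{\min}$, in the Cartan--Hadamard argument. Nothing is lost on your side, though. Every unit vector of $T_{\gamma(r)}S_r(p)$ is the value of some such $E$, so your bound, applied to all $E$, recovers $\mu_{\max}\leqslant \mathrm{sn}_k'/\mathrm{sn}_k$.

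\textbf{A wording point in your alternative route.} Inequality~\eqref{eq333-bis} gives $\lambda_{\max}(r)\leqslant C\sin^2(\sqrt{k\,}r)$. This forces degeneration as $r\to(\pi/\sqrt{k\,})^-$, not strictly before $\pi/\sqrt{k\,}$. The contradiction is unaffected, since $\gamma(\pi/\sqrt{k\,})\in D_p$.
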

\begin{proof}[Proof via the distance functions]\ \\
We follow the same proof strategy as in the Bonnet--Myers theorem using the fundamental equations, but reasoning on the maximal eigenvalue $\mu_{\max}$ of the matrix ${\mathrm S}_\alpha^\beta$ in polar coordinates centered at $p$, which is the maximal eigenvalue of the second fundamental form of the geodesic spheres centered at $p$.\\
Assume for contradiction that there exists a minimizing geodesic $\gamma : [0,L] \to M$ of unit speed starting from $p\in M$, with $L>{\pi}/{\sqrt{k\,}}$. Then, we have seen that along such geodesic, the differential inequality~\eqref{eq333} holds,
$$
\partial_{r}\mu_{\max}(r)\leqslant -\mu_{\max}^2(r)-{\mathrm{MinSec}}\,,
$$
for almost every $r\in(0,L)$ (the function $\mu_{\max}$ is absolutely continuous), so under our assumptions,
\begin{equation}\label{eq33333}
\partial_{r}\mu_{\max}(r)+\mu_{\max}^2(r)\leqslant -k\,.
\end{equation}
Since ${\mathrm S}_\alpha^\beta=\mathrm{h}_{\alpha \lambda} g^{\lambda \beta}$ and $\mathrm{h}_{\alpha \beta}= \mathrm{Hess}\,r(\partial_{\theta^{\alpha}},\partial_{\theta^{\beta}})$, by performing computations similar to those carried out to obtain the second and the third limits in Proposition~\ref{ppp}, it follows that
\begin{equation}\label{eq11111}
{\mathrm S}_\alpha^\beta=\frac{\delta_{\alpha}^{\beta}}{r}+o(1)\,,
\end{equation}
as $r\to0^+$. Consequently, we obtain
$$
\mu_{\max}(r)-\frac{1}{r}=o(1)\,,
$$
as $r\to 0^+$.\\
Applying then the Riccati comparison principle, Proposition~\ref{riccati} (which can be easily seen to work even if the functions are only absolutely continuous and the inequality in the statement holds only almost everywhere), we conclude that 
$$
\mu_{\max}(r)\leqslant\frac{{\mathrm{sn}}'_{k}(r)}{{\mathrm{sn}}_{k}(r)}\,,
$$
for every $r\in(0,\pi/\sqrt{k\,}\,)$, as $L>{\pi}/{\sqrt{k\,}}$. This would lead to a contradiction, since $\cot(\sqrt{k\,} r) \to - \infty$, as $r\to({\pi}/\sqrt{k\,}\,)^-$.
\end{proof}

\medskip

\noindent{\textbf{Riemannian manifolds with nonpositive curvature}\smallskip\\
Another well--known classical result is the Cartan--Hadamard theorem, saying that the universal covering of a complete Riemannian manifold with nonpositive curvature is diffeomorphic to $\R^n$. The key point of the proof is to show the following proposition.

\begin{proposizione}\label{CH}
Let $(M,g)$ be a complete Riemannian manifold with nonpositive sectional curvature. Then, every point $p\in M$ has no conjugate points.
\end{proposizione}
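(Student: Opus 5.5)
We argue by contradiction, using the comparison inequality~\eqref{Jaceqbis} together with the degeneration analysis of the metric coefficients at the end of Section~\ref{eriemsec}. Suppose $p\in M$ has a conjugate point. Then there are $\xi\in\SSSS^{n-1}_p$ and a first value $r_0>0$ such that $d(\exp_p)_{r_0\xi}$ is singular, while $\exp_p$ is a local diffeomorphism near $r\xi$ for every $r<r_0$. The distance function $d_p$ need not be smooth up to $r_0$, since the cut point may come earlier. So we do not use $d_p$ itself. Instead we work with the pulled-back metric $\widetilde g=\exp_p^*g$ on a neighborhood in $T_pM$ of the segment $\{r\xi: 0<r<r_0\}$, where $\exp_p$ is a local diffeomorphism. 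On a small cone around this segment, $\widetilde g$ is a Riemannian metric with the same sectional curvatures, hence still nonpositive. The radial function $|v|_{g_p}$ is a distance function for $\widetilde g$ there: by the Gauss lemma the radial lines are unit-speed geodesics orthogonal to the spheres. Polar coordinates $(\theta,r)$ are adapted to it, and the metric has the form $\widetilde g=g_{\alpha\beta}\,d\theta^\alpha\otimes d\theta^\beta+dr\otimes dr$.

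All the local computations of Sections~\ref{eriemsec} and~\ref{The distance function from a point} only use that $r$ is a distance function in polar coordinates. These include Proposition~\ref{ppp}, inequality~\eqref{eq:fo2}, and the Riccati comparison principle, Proposition~\ref{riccati}. They therefore hold along the segment for $r\in(0,r_0)$. Take any nonzero $J=a^\alpha\partial_{\theta^\alpha}$ with constant coefficients; this is a Jacobi field for $r$ orthogonal to $\partial_r$. Since $\Sec\leqslant 0$, the case $K=0$ of~\eqref{Jaceq-1} gives
$$
\frac{\mathrm{Hess}\,r(J,J)}{g(J,J)}\geqslant\frac1r\qquad\text{on }(0,r_0).
$$
By~\eqref{eq810}, $\partial_r\log g(J,J)=2\,\mathrm{Hess}\,r(J,J)/g(J,J)\geqslant 2/r$. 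Hence $g(J,J)/r^2$ is nondecreasing on $(0,r_0)$. By~\eqref{eq710} and~\eqref{ffeq1}, $g(J,J)/r^2$ tends to $a^\alpha a^\beta g^{\SSSS^{n-1}_p}_{\alpha\beta}(\xi)$ as $r\to0^+$. Therefore
$$
g_{\alpha\beta}(r)\,a^\alpha a^\beta\geqslant r^2\,g^{\SSSS^{n-1}_p}_{\alpha\beta}(\xi)\,a^\alpha a^\beta\quad\text{for all }r\in(0,r_0).
$$
In particular $\lambda_{\min}(r)\geqslant c\,r^2$, with $c>0$ the minimum eigenvalue of the round metric matrix at $\xi$ in the chosen chart.

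On the other hand, at the end of Section~\ref{eriemsec} it was shown that $\partial_{\theta^\alpha}|_{\gamma(r)}=d(\exp_p)_{r\xi}(r\eta_\alpha)$. If $d(\exp_p)_{r_0\xi}$ kills a nonzero $\overline v^\alpha\eta_\alpha$, continuity gives $g_{\alpha\beta}(r)\overline v^\alpha\overline v^\beta\to0$ as $r\to r_0^-$. This is the same degeneration argument read in reverse. It contradicts the lower bound $g_{\alpha\beta}(r)\overline v^\alpha\overline v^\beta\geqslant c\,r^2|\overline v|^2$, whose right-hand side tends to $c\,r_0^2|\overline v|^2>0$. Hence $d\exp_p$ is nonsingular everywhere, and $p$ has no conjugate points.

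The main obstacle is the first step: justifying that the Section~\ref{The distance function from a point} machinery applies beyond the cut locus, where $d_p$ is no longer smooth. I plan to handle this by the lifting to $(T_pM,\exp_p^*g)$ just described. I would check carefully that nothing used there depends on global minimality. The limits of Proposition~\ref{ppp} use only normal coordinates near $0$. The Riccati comparison needs only smoothness on $(0,r_0)$. Both survive on the lifted cone.
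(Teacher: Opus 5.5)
Your proof is correct, and it takes a genuinely different route from the paper's on two points.

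The paper works with the extremal eigenvalues $\mu_{\min}$ of $\mathrm{S}_\alpha^\beta$ and $\lambda_{\min}$ of $g_{\alpha\beta}$. These are only locally Lipschitz, so it needs the almost-everywhere inequalities \eqref{eq333-0} and \eqref{eq333-0_bis}. It also argues only along a geodesic that is still minimizing, where $d_p$ is smooth, and defers the non-minimizing case to an external reference. Since the first conjugate point can never precede the cut point, that minimizing case only excludes conjugate points that coincide with cut points.

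You instead test the metric on the smooth Jacobi fields $J=a^\alpha\partial_{\theta^\alpha}$ with constant coefficients. Then the already established \eqref{Jaceq-1} with $K=0$, together with \eqref{eq810}, gives directly the bound $g_{\alpha\beta}(r)a^\alpha a^\beta\geqslant r^2g^{\SSSS^{n-1}_p}_{\alpha\beta}(\xi)a^\alpha a^\beta$, with no eigenvalue-regularity issues. By replacing $d_p$ with $|v|_{g_p}$ on $(T_pM,\exp_p^*g)$, your argument depends only on the nonsingularity of $d\exp_p$ along $[0,r_0)\xi$, not on minimality. So you prove the full statement, which is what the Cartan--Hadamard theorem actually needs.

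The price is the check you already flag, and it goes through:
\begin{itemize}
\item the Gauss lemma holds on all of $T_pM$, by completeness;
\item linear coordinates on $T_pM$ are normal coordinates of $\exp_p^*g$ at $0$;
\item Propositions~\ref{efgr}, \ref{ppp} and~\ref{riccati} are purely local.
\end{itemize}
If a single cone around all of $(0,r_0)\xi$ is awkward, use the open star-shaped set of points $t\eta$ such that $d\exp_p$ is nonsingular on $[0,t]\eta$.

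One step you should make explicit. When you read the degeneration argument in reverse, a nonzero $w\in\ker d(\exp_p)_{r_0\xi}$ must lie in $T_\xi\SSSS^{n-1}_p$, i.e.\ be of the form $\overline v^\alpha\eta_\alpha$. This follows from the Gauss lemma: $g\bigl(d(\exp_p)_{r_0\xi}(\xi),d(\exp_p)_{r_0\xi}(w)\bigr)=g_p(\xi,w)$ forces $w\perp\xi$.
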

\begin{proof}[Proof via the distance functions]\ \\
Similarly to the alternative proof of Synge's lemma~\ref{Synge} above, we consider the minimal eigenvalue $\mu_{\min}$ of the matrix ${\mathrm S}_\alpha^\beta$ in polar coordinates centered at $p$, which is the minimal eigenvalue of the second fundamental form of the geodesic spheres around $p$.\\
Let $\gamma:I\to M$ a unit--speed geodesic starting from $p$, up to its first conjugate point (if it exists).\\
If $\gamma$ is minimizing, the differential inequality~\eqref{eq333-0} holds,
$$
\partial_{r}\mu_{\min}(r)\geqslant -\mu_{\min}^2(r)
$$
for almost every $r\in I$. Moreover, by formula~\eqref{eq11111}, we have $\mu_{\min}(r)>1/r$ for every $r\in I$. Then, since (formula~\eqref{eq333-0_bis})
$$
\partial_{r}\lambda_{\min}(r)\geqslant2\lambda_{\min}(r)\mu_{\min}(r)
$$
for almost every $r$, where $\lambda_{\min}$ is the minimal eigenvalue of the matrix $g_{\alpha\beta}$, we conclude that $\lambda_{\min}(r)\geqslant\delta>0$ for every $r\in I$. It follows that there cannot be any point conjugate to $p$ along the geodesic $\gamma$, by the discussion at the end of Section~\ref{eriemsec}.\\
We refer the reader to \cite{Tesifra} for the general case where $\gamma$ is possibly non--minimizing.
\end{proof}

\medskip

\noindent{\textbf{Volume growth of geodesic balls}\smallskip\\
We deal with estimating the volume growth of the geodesic balls of complete Riemannian manifolds whose Ricci curvature is bounded below by a constant (which is not necessarily positive).

\begin{proposizione}[Bishop inequality]
If $(M,g)$ is a complete Riemannian manifold of dimension $n$, satisfying $\Ric \geqslant k(n-1)g$ for some $k\in\R$, then there holds 
$$
\vol(B_r(p))\leqslant\vol_k(B_r({\overline{p}}))\,,\label{stimaG}
$$
for every $p\in M$ and for all $r\in(0,{\pi}/{\sqrt{k\,}}\,)$. Here, $B_r(\overline{p})$ denotes the geodesic ball of radius $r>0$ centered at a point $\overline{p}\in\mathbb{M}_k^n$. 
\end {proposizione}
\begin{proof}[Proof via the distance functions]\ \\ 
We start by considering the polar coordinates $\varphi=(\theta,r)$ defined in the neighborhood
$$
U=\left\{\exp_p(tv)\in M : \ v \in \SSSS^{n-1}_p\setminus\{N\} \text{ and } t\in(0,c(v)) \right\}\,,
$$
of the point $p$ (recall that $c(v)$ is the first value such that the geodesic $\exp_p(tv)$ reaches the cut~locus of $p$), induced by the stereographic projection $(V,\theta)$ of $\SSSS^{n-1}_p$ from the north pole $N$.  Afterwards, we denote by $W$ the open subset $\varphi(U)\subseteq \R^n$ and we notice that 
$$
W\subseteq W_{k}=\left\{(\theta, t)\in \R^{n}\  : \ \theta \in \R^{n-1}\ \text{ and }\ \  t\in(0,{\pi}/{\sqrt{k\,}}\,)\, \right\}\,,
$$
by the Bonnet--Myers theorem, when $k>0$.\\ 
We set $G=\sqrt{\det g_{ij}}\circ\varphi^{-1}$, which is the density of the canonical volume measure of $(M,g)$ in the polar coordinates $\varphi$ on $U$. By construction, $G$ is then clearly defined on $W$.\\
Now, we recall that the Riemannian submanifold obtained by $\mathbb{M}_k^n$ without ${\overline{p}}$ and the cut~locus of ${\overline{p}}$, is isometric to $(M^{n}_{k},g^{n}_{k})$, given by formula \eqref{baseSF}. The latter is, in turn, isometric to
\begin{equation}
\big(\SSSS^{n-1}_p\times(0,\pi/\sqrt{k\,}\,),\,{\mathrm{sn}}_{k}^{2}(r)\,g_{\SSSS^{n-1}_p}+dr\otimes dr\big)\,.
\end{equation}
We then denote by $G_k(\theta,r)$ the analogue of $G(\theta,r)$ for the latter space, which is obtained by using the aforementioned chart $(V,\theta)$ of $\SSSS^{n-1}_p$ and is thus defined on $W_k$.\\
In the same notation of the proof of Proposition~\ref{constsec}, along a unit--speed geodesic $r\mapsto\gamma(r)=\exp_{p}(rv)$ with $v=v(\theta)$, by formula~\ref{eq:fff0}, there holds
\begin{equation}\label{eq:fff02}
\lim_{r \to 0^{+}}\frac{G(\theta, r)}{{\mathrm{sn}}^{2(n-1)}_{k}(r)}=\det g_{\alpha\beta}^{\SSSS^{n-1}_p}\!(v)\,,
\end{equation}
in particular,
$G(\theta,r)/G_k(\theta, r)\to 1$ as $r\to 0^+$.\\
Since, by Proposition~\ref{eqS}, we have
$$
\HHH(\theta, r)=\frac{\partial}{\partial r} \log \sqrt{G(\theta,r)}\qquad\qquad\text{ and }\qquad\qquad \HHH_k(\theta, r)=\frac{\partial}{\partial r} \log \sqrt{G_k(\theta, r)}\,,
$$
if $\Ric\geqslant(n-1) kg$, then by the mean curvature comparison inequality~\eqref{trSeqbis}, it follows $\HHH(\theta, r) \geqslant \HHH_k(\theta, r)$, thus
\begin{equation}\label{eqcar7778}
\frac{\partial}{\partial r} \log \sqrt{G(\theta, r)} \leqslant \frac{\partial}{\partial r} \log \sqrt{G_k(\theta, r)}\,,\qquad\text{ that is, }\qquad\frac{\partial}{\partial r}\frac{G(\theta, r)}{G_k(\theta, r)}\leqslant 0\,.
\end{equation}
This clearly implies that $G(\theta,r )\leqslant G_k(\theta, r)$, then the conclusion follows by integration. Indeed, by using the fact that the cut~locus of $p$ has zero volume measure in $(M,g)$, we observe that
$$
\vol(B_r(p))=\int_{W_r} G(\theta,t)\,d\theta\,dt
$$
where $W_r=\varphi(U_r)\subseteq W$, with
$$
U_r=\Big\{\exp_p(tv)\in M : \ v \in \SSSS^{n-1}_p\setminus\{N\} \text{ and } t\in\big(0,\min \{r, c(v)\}\big) \Big\}\,.$$
Then, setting $\overline{t}(\theta)=\min\{r,c(v(\theta))\}$, the result obtained above, coupled with Fubini's theorem, implies
$$
\vol(B_r(p))\!=\!\!\int_{\R^{n-1}}\!\!\!\!\!\!\!d\theta\int_{0}^{\overline{t}(\theta)}\!\!\!G(\theta,t)\,dt\leqslant\!\int_{\R^{n-1}}\!\!\!\!\!\!\!d\theta\int_{0}^{\overline{t}(\theta)}\!\!\!G_k(\theta,t)\,dt\leqslant\!\int_{\R^{n-1}}\!\!\!\!\!\!\!d\theta\int_{0}^{r}\!\!G_k(\theta,t)\,dt\!=\!\vol_k(B_r({\overline{p}})).
$$
\end{proof}

\begin{osservazione}
Along the same lines, the {\em Bishop--Gromov inequality} also follows: the function
$$
r \mapsto \frac{\vol(B_r(p))}{\vol_k(B_r({\overline{p}}))}
$$
is nonincreasing, for $r>0$ (see~\cite[Theorem~III.4.5]{chavel}).
\end{osservazione}

\bibliographystyle{amsplain}
\bibliography{biblio}

\end{document}